\newtheorem{theorem}{Theorem}[section]
\newtheorem{thm}[theorem]{Theorem}
\newtheorem{lemma}[theorem]{Lemma}
\newtheorem{lem}[theorem]{Lemma}
\newtheorem{remark}[theorem]{Remark}
\newtheorem{proposition}[theorem]{Proposition}
\newtheorem{corollary}[theorem]{Corollary}
\newtheorem{hyp}[theorem]{HYPOTHESIS}
\theoremstyle{definition}
\newtheorem{defn}[theorem]{Definition}
\newtheorem{example}[theorem]{Example}
 \newtheorem{notation}{Notation}
\theoremstyle{remark}
\numberwithin{equation}{section}
 \DeclareMathAlphabet{\mathpzc}{OT1}{pzc}{m}{it}
 \DeclareMathAlphabet{\mathsfsl}{OT1}{cmss}{m}{sl}
\newcommand{\dif}{\mathrm{d}}
\newcommand{\Be}{\begin{equation}}
\newcommand{\Ee}{\end{equation}}
\newcommand{\Bs}{\begin{split}}
\newcommand{\Es}{\end{split}}
\newcommand{\Bes}{\begin{equation*}}
\newcommand{\Ees}{\end{equation*}}
\newcommand{\BT}{\begin{thm}}
\newcommand{\ET}{\end{thm}}
\newcommand{\Bp}{\begin{proof}}
\newcommand{\Ep}{\end{proof}}
\newcommand{\BL}{\begin{lem}}
\newcommand{\EL}{\end{lem}}
\newcommand{\BP}{\begin{proposition}}
\newcommand{\EP}{\end{proposition}}
\newcommand{\BC}{\begin{corollary}}
\newcommand{\EC}{\end{corollary}}
\newcommand{\BR}{\begin{remark}}
\newcommand{\ER}{\end{remark}}
\newcommand{\BD}{\begin{defn}}
\newcommand{\ED}{\end{defn}}
\newcommand{\BI}{\begin{itemize}}
\newcommand{\EI}{\end{itemize}}
\begin{document}
\title[Parameter estimation of non-ergodic Ornstein-Uhlenbeck]{Parameter estimation of non-ergodic Ornstein-Uhlenbeck processes driven by general Gaussian processes}
\author[YP. Lu]{Yanping LU}
 \address{}
\email{luyp@jxnu.edu.cn}
\begin{abstract}

In this paper, we consider the statistical inference of the drift parameter $\theta$ of non-ergodic Ornstein-Uhlenbeck~(O-U) process driven by a general Gaussian process $(G_t)_{t\ge 0}$. When $H \in (0, \frac 12) \cup (\frac 12,1) $ the second order mixed partial derivative of $R (t, s) = E [G_t G_s] $ can be decomposed into two parts, one of which coincides with that of fractional Brownian motion (fBm), and the other of which is bounded by $|ts|^{H-1}$. This condition covers a large number of common Gaussian processes such as fBm, sub-fractional Brownian motion and bi-fractional Brownian motion. Under this condition, we verify that $(G_t)_{t\ge 0}$ satisfies the four assumptions in references \cite{El2016}, that is, noise has H\"{o}lder continuous path; the variance of noise is bounded by the power function; the asymptotic variance of the solution $X_T$ in the case of ergodic O-U process $X$ exists and strictly positive as $T \to \infty$; for fixed $s \in [0,T)$, the noise $G_s$ is asymptotically independent of the ergodic solution $X_T$ as $T \to \infty$, thus ensure the strong consistency and the asymptotic distribution of the estimator $\tilde{\theta}_T$ based on continuous observations of $X$. Verify that $(G_t)_{t\ge 0}$ satisfies the assumption in references \cite{Es-Sebaiy2019}, that is, the variance of the increment process $\{ \zeta_{t_i}-\zeta_{t_{i -1}}, i =1,..., n \}$ is bounded by the product of a power function and a negative exponential function, which ensure that $\hat{\theta}_n$ and $\check{\theta}_n $ are strong consistent and the sequences $\sqrt{T_n} (\hat {\theta}_n - \theta)$ and $\sqrt {T_n} (\check {\theta}_n - \theta)$ are tight based on discrete observations of $X$.\par

{\bf Keywords:} Ornstein-Uhlenbeck process; Gaussian process; least squares estimation; consistency; asymptotic distribution.\\

\end{abstract}
\maketitle

\section{ Introduction}\label{sec 03}

In this paper, we consider the statistical inference of the nonergodic O-U process $X=\{X_t,t \in[0, t]\}$ defined by the following stochastic differential equation:
\begin{equation}\label{OUmedel}
\mathrm{d} X_t=\theta X_t\mathrm{d} t+ \mathrm{d} G_t, \quad X_0 = 0,
\end{equation}Where $G= (G_t)_{t \ge 0}$ is a one-dimensional zero-mean Gaussian process, and $\theta >0$ is an unknown parameter.

First, we review the statistical inference of the drift coefficient $\theta $ under the condition that ($\theta <0$) is traversed by the model ~\eqref{OUmedel}. When $G$ is a fractional Gaussian process, citation \cite{hu2010, Sottinen2018, hunua2019, Caiwang2022, Kleptsyna2002} and its references study the strong consistency and asymptotic normality of least-squares estimators (LSE), moment estimators and maximum likelihood estimators based on continuous time observations of $X$ respectively.
\cite{XIAO2011, Hu2013} and their references study the asymptotic behavior of LSE based on discrete-time observations of $X$. \cite{Chenzhou2021} and \cite{chengu2021} have studied $H\in(\frac 12, 1)$ and $H\in(0, \frac 12)$ strong consistency, asymptotic normal Berry-ess\'een bound of $X$ continuous time observation for LSE sum and moment estimation, Where \cite{chengu2021} gives the Berry-ess\'een bound which requires $H \in (0,\frac 38)$.

For the case where model ~\eqref{OUmedel} is not ergodic ($\theta >0$). Based on continuous time observations of $X$,  \cite{Belfadli2011} and \cite{Mendy2013} respectively study that when $G$ is a fBm and sub-fractional Browne motion with ~Hurst parameter $H\in (\frac 12,1)$, the strong consistency and asymptotic distribution of the LSE of the drift coefficient $\theta$, \cite{El2016} generalize the results to the general form, and give sufficient conditions based on the properties of $G$. Three examples of fBm with general Hurst parameter, subfractional Brownian motion and double fractional Brownian motion are given. There are also many studies based on discrete time observations of $X$. For example, \cite{Essebaiy2014} and \cite{Cheng2017} respectively study the weighted fractional Browne motion when $G$ is $H\in (\frac 12,1)$ and the parameters are $-1 <a<0,-a<b<a+1$. The strong consistency and random sequence compactness of two LSEs of the drift coefficient $\theta$, then the results in \cite{Es-Sebaiy2019} are generalized to the general form, and sufficient conditions based on the properties of $G$ are given. Three examples of fBm with general ~Hurst parameter, subfractional Brownian motion and double fractional Brownian motion are given. \cite{Alsenafi2021} is studied based on the continuous and discrete time $X $ observation, and \cite{Cheng2017} will the result in promoting to the parameters for $a > 1, | b| < 1 $ and ~ $|b |  < a + 1 $.

For a given noise $G$, prove the strong consistency and asymptotic distribution of the LSE of the drift coefficient $\theta$ based on the continuous time observation of $X$, and prove the strong consistency and random sequence compactness of the two LSEs based on the discrete time observation of $X$. It is necessary to test the four hypotheses in \cite{El2016} and the third hypothesis in \cite{Es-Sebaiy2019}. If the noise $G$ is a Gaussian process listed below, it needs to be tested separately. This work is undoubtedly tedious and adds a lot of unnecessary calculations to the study of drift coefficient estimation for non-ergodic O-U processes. Therefore, this paper presents a more concise method, that is, including a large number of Hypothesis \ref{hypthe1} of Gaussian processes.

\begin{hyp}\label{hypthe1}
For $H \in (0,\frac 12) \cup (\frac 12 ,1)$ , the covariance function $R(s,t)=E (G_tG_s)$ satisfies that
\begin{enumerate}
  \item[(1)] for any $s \ge 0$ , $R(s,0)=0$ .
  \item [(2)]for any fixed $s \in(0,T)$ , $R(t,s)$ is a continuous function on $[0,T]$ which is differentiable with respect to $t (0,s)\cup (s,T)$ , such that $\frac {\partial}{\partial t} R(s,t)$ is absolutely integrable.
  \item[(3)] for any fixed $t \in(0,T)$ , the difference
  $$\frac {\partial R(s,t)}{\partial t} - \frac {\partial R_{H}(s,t)}{\partial t}$$  is a continuous function on $[0,T]$ which is differentiable with respect to $s$ in $(0,T)$ such that $\Phi (s,t)$, the partial derivative with respect to $s$ of the difference, satisfies
  \begin{equation}\label{phi}
  |\Phi(s,t)| \le C'_H  |ts |^{H-1},
  \end{equation}
  where the constant $C'_H \ge 0$ do not dependent on $T$ , and $R_H(s,t)$ is the covariance function of the fBm.
\end{enumerate}
\end{hyp}

Examples of fBm, subfractional Brownian motion, and other Gaussian processes that satisfy the Hypothesis ~\ref{hypthe1} are given below, as well as some that do not.

Based on continuous and discrete time observations of $X$, the least square technique is used to construct the estimators of the drift coefficient $\theta$. Firstly, reference \cite{El2016} studies the ~LSE of the drift coefficient $\theta$ based on continuous time observations of $X$, which is defined as follows
\begin{equation}\label{hattheta}
\tilde{\theta}_T=\frac{\int_0^T X_s \mathrm{d} X_s}{\int_0^T X_s^2 \mathrm{d}  s}=\frac{X_T^2 }{2\int_0^T X_s^2 \mathrm{d} s}.
\end{equation}
Based on the discrete time observation of ~$X$, it is assumed that the process given by the model ~\eqref{OUmedel} ~$X$is observed at equal distance in time with step size $\Delta_n: t_i = I \Delta_n, I =0,\cdots,n$, and $T_n = n\Delta_n$ represents the length of "observation window", where when $n \to \infty$, $\Delta_n \to 0$ and $n\Delta_n \to \infty$. Then, based on the sampled data $X_{t_i}, I =0,\cdots,n$, consider the following two LSEs
\begin{align}\label{hattheta2}
\hat{\theta}_{n} = \frac{\sum\limits_{i=1}^{n} X_{t_{i-1}}(X_{t_i}- X_{t_{i-1}})}{\Delta_n \sum\limits_{i=1}^n X_{t_{i-1}^2}},
\end{align}and
\begin{align}\label{hattheta3}
\check{\theta}_n = \frac{X_{T_n}^2}{2\Delta_n \sum\limits_{i=1}^n X_{t_{i-1}^2}}.
\end{align}

It is worth noting that $X_t$ in model ~\eqref{OUmedel} can be expressed as follows
\begin{align}\label{solution1}
X_t=e^{\theta t} \int_{0}^t e^{-\theta s} \mathrm{d} G_s, \quad t \in[0,T].
\end{align}The integral about $G$ is interpreted as Young meaning.
Suppose the lemma ~\ref{strong1} holds. By using (a-1) of the reference \cite{El2016}, we can obtain
\begin{align}\label{solution2}
X_t =G_t +\theta e^{\theta t}Z_t, \quad t \in[0,T],
\end{align}where
\begin{align}\label{solution21}
Z_t:=\int_{0}^t e^{-\theta s} G_s \mathrm{d} s, \quad t \in[0,T].
\end{align}In addition, the following process is given:
\begin{align}\label{solution12}
\zeta_t := \int_{0}^t e^{-\theta s} \mathrm{d} G_s, \quad t \in[0,T].
\end{align}

The strong consistency and asymptotic distribution of the estimator $\tilde{\theta}_T$ for continuous time observation based on $X$ are given below, and for discrete time observation based on $X$, estimators  $\hat{\theta}_n $ and  $\check{\theta}_n$ the strong consistency and the random sequence of $\sqrt {T_n} (\hat {\theta} _n - \theta) $ and $\sqrt {T_n}(\check{\theta}_n - \theta)$ is tight.

\begin{thm}\label{strongthm}
When Hypothesis~\ref{hypthe1} is satisfied, the estimator $\tilde{\theta}_T$ is given by the formula
~\eqref{hattheta}. Then, when $T \to \infty$,
\begin{align}\label{strong11}
\tilde{\theta}_T \longrightarrow \theta \qquad a.s..
\end{align}
In addition, when $T \to \infty$
\begin{align}\label{asymptotic1}
e^{\theta t}\left( \tilde{\theta}_T -\theta \right) \stackrel{ {law}}{\longrightarrow} \frac{2\sigma_G}{\sqrt{E(Z_{\infty}^2)}} \mathcal{C}(1),
\end{align}where $\sigma_G =\sqrt{\frac{H\Gamma(2H)}{\theta^{2H}}}$ , integral ~$Z_{\infty}=\int_{0}^{\infty} e^{-\theta s} G_s \dif s$ is well-defined (see Lemma 2.1 of \cite{El2016}) , and $\mathcal{C}(1)$ is the standard Cauchy distribution with the probability density function $\frac {1}{\pi(1+x^2)} :x \in R$.
\end{thm}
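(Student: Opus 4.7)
The plan is to reduce the theorem to a direct application of the main result of El~Machkouri et al.~\cite{El2016}. That paper proves both statements \eqref{strong11} and \eqref{asymptotic1} (including the Cauchy limit) for any centered Gaussian driver $G$ satisfying four structural hypotheses: (i) $G$ has H\"older continuous sample paths of some positive exponent; (ii) $E|G_t-G_s|^2$ is bounded by a power of $|t-s|$; (iii) the variance of the ergodic companion $\widetilde X_T:=\int_0^T e^{-\theta(T-s)}\,\mathrm{d}G_s$ converges to a strictly positive limit $\sigma_G^2$ as $T\to\infty$; and (iv) for each fixed $s\ge 0$, $E(G_s\widetilde X_T)\to 0$ as $T\to\infty$. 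Consequently, the only thing that really has to be done is to show that Hypothesis~\ref{hypthe1} implies (i)--(iv) and that the limit in (iii) equals $\sigma_G^2=H\Gamma(2H)\theta^{-2H}$.

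For (i) and (ii), I would exploit the additive decomposition $R(s,t)=R_H(s,t)+\widetilde R(s,t)$ suggested by item~(3) of Hypothesis~\ref{hypthe1}, where $\widetilde R(s,t):=\int_0^s\!\int_0^t \Phi(u,v)\,\mathrm{d}u\,\mathrm{d}v$. This is legitimate: item~(1) makes $\widetilde R$ vanish on the axes, and item~(2) yields the integrability needed to recover $\widetilde R$ from its mixed second derivative $\Phi$. A direct computation then gives
\[
E|G_t-G_s|^2 \;=\; |t-s|^{2H} + \bigl[\widetilde R(t,t)+\widetilde R(s,s)-2\widetilde R(s,t)\bigr].
\]
Using $|\Phi(u,v)|\le C'_H|uv|^{H-1}$ and splitting the integration rectangle along the diagonal, the second bracket is bounded by a constant multiple of $|t-s|^{2H}$ in both regimes $H\in(0,\tfrac12)$ and $H\in(\tfrac12,1)$. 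Hence (ii) holds with exponent $2H$, and the Kolmogorov continuity criterion applied to the centered Gaussian field then yields (i) for every H\"older exponent strictly less than $H$.

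For (iii) and (iv), which form the technical core, I write
\[
E|\widetilde X_T|^2 \;=\; \int_0^T\!\!\int_0^T e^{-\theta(T-s)}e^{-\theta(T-t)}\,\mathrm{d}_s\mathrm{d}_t R(s,t),
\]
and split according to $R=R_H+\widetilde R$. The fBm part converges to $\sigma_G^2=H\Gamma(2H)\theta^{-2H}$ as already identified in Lemma~2.1 of~\cite{El2016}. The remainder can be rewritten as $\int_0^T\!\int_0^T e^{-\theta(2T-s-t)}\Phi(s,t)\,\mathrm{d}s\,\mathrm{d}t$; pairing the bound $|\Phi(s,t)|\le C'_H|st|^{H-1}$ with the concentration of the exponential weights near $s=t=T$ (change variables $s=T-u$, $t=T-v$) shows that this remainder decays to $0$ as $T\to\infty$. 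For (iv), I would express $E(G_s\widetilde X_T)=\int_0^T e^{-\theta(T-u)}\tfrac{\partial R(s,u)}{\partial u}\,\mathrm{d}u$ by item~(2), split the kernel once more into an fBm part (handled as in~\cite{El2016}) and a remainder controlled by $\int_0^T e^{-\theta(T-u)}\int_0^s|\Phi(v,u)|\,\mathrm{d}v\,\mathrm{d}u$, which the bound on $\Phi$ plus the exponential weight pushes to $0$.

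Once (i)--(iv) are verified, the two conclusions \eqref{strong11} and \eqref{asymptotic1} follow verbatim from the main theorem of~\cite{El2016}, the explicit constant $\sigma_G=\sqrt{H\Gamma(2H)/\theta^{2H}}$ being inherited from the fBm contribution in (iii). The main obstacle I anticipate is the uniform control of the remainder integrals when $H\in(0,\tfrac12)$, where the singularity $|st|^{H-1}$ at the origin is integrable but has to be carefully paired with the exponential weights $e^{-\theta(T-\cdot)}$; a change of variables $u=T-s$, $v=T-t$ together with a dyadic splitting of the integration domain near $0$ and near $T$ should handle this uniformly in $T$.
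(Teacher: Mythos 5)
Your proposal is correct and follows essentially the same route as the paper: both reduce the theorem to verifying the four conditions of El Machkouri--Es-Sebaiy--Ouknine via the decomposition $R=R_H+\widetilde R$ with $\partial^2\widetilde R/\partial s\partial t=\Phi$ bounded by $C'_H|st|^{H-1}$, obtaining the increment bound $E|G_t-G_s|^2\le C|t-s|^{2H}$, the limit $\sigma_G^2$ for the variance of $e^{-\theta T}\int_0^Te^{\theta s}\,\mathrm{d}G_s$ (fBm part plus an $O(T^{2H-2})$ remainder), and the asymptotic independence condition. The paper merely packages the same computations as the Hilbert-space inequality $|\langle f,g\rangle_{\mathfrak H}-\langle f,g\rangle_{\mathfrak H_1}|\le\langle f,g\rangle_{\mathfrak H_2}$ and, for condition (iv) with $H<\tfrac12$, splits the test function by support to avoid the non-integrable fBm kernel on the diagonal, which your deferral of the fBm part to \cite{El2016} handles equivalently.
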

\begin{thm}\label{strongthm2}
When Hypothesis~\ref{hypthe1} is satisfied, the estimators $\hat{\theta}_n$ and $\check{\theta}_n$ are given by the formulas \eqref{hattheta2} and \eqref{hattheta3}. If when $n \to \infty$, $\Delta_n \to 0$ and for some $\alpha >0$ , $n\Delta_n^{1+\alpha} \to \infty$ , then,
\begin{align}\label{strong12}
\hat{\theta}_n \longrightarrow \theta , \quad \check{\theta}_n \longrightarrow \theta  \qquad a.s. .
\end{align}And, for any $q \ge 0$ ,
\begin{align}\label{asymptotic2}
\Delta_{n}^q e^{\theta T_n}(\hat{\theta}_n - \theta) ~~\text{and}~~ \Delta_{n}^q e^{\theta T_n}(\check{\theta}_n - \theta) ~\text{is not tight}.
\end{align}
In addition, when $n \to \infty$, $n\Delta_{n}^3 \to 0$ , in a sense, the estimators $\hat{\theta}_n$ and $\check{\theta}_n$ is $\sqrt{T_n}\text{consistent}$ , the sequence
\begin{align}\label{asymptotic21}
\sqrt{T_n}(\hat{\theta}_n - \theta) ~\text{and}~ \sqrt{T_n}(\check{\theta}_n - \theta)~ \text{is tight}.
\end{align}
\end{thm}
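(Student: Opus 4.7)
The strategy is to reduce both claims to the general framework of \cite{Es-Sebaiy2019} by verifying the single assumption imposed there on the increment process, namely that
\begin{align*}
E\!\left[(\zeta_{t_i}-\zeta_{t_{i-1}})^2\right] \le C\,\Delta_n^{2H}\,p(t_{i-1})\,e^{-2\theta t_{i-1}}
\end{align*}
for all $1\le i\le n$, where $p$ is a polynomial factor independent of $n$. Once this estimate is in place, the strong consistency \eqref{strong12}, the non-tightness at rate $\Delta_n^q e^{\theta T_n}$ in \eqref{asymptotic2}, and the $\sqrt{T_n}$-tightness \eqref{asymptotic21} under $n\Delta_n^{3}\to 0$ are immediate applications of the corresponding general theorems of \cite{Es-Sebaiy2019}, so my plan is to invoke those results rather than reproduce the stochastic-calculus arguments behind them.

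First I would rewrite the variance of the increment of \eqref{solution12} as a double Young integral against $R$, which, using the integrability and differentiability guaranteed by Hypothesis~\ref{hypthe1}(1)--(2) together with an integration-by-parts identity of Young type (valid on $[t_{i-1},t_i]^2$ once $i\ge 1$), reduces to
\begin{align*}
E\!\left[(\zeta_{t_i}-\zeta_{t_{i-1}})^2\right] = \int_{t_{i-1}}^{t_i}\!\int_{t_{i-1}}^{t_i} e^{-\theta(s+t)}\,\frac{\partial^2 R(s,t)}{\partial s\,\partial t}\,ds\,dt.
\end{align*}
Next I would invoke Hypothesis~\ref{hypthe1}(3) to split the mixed partial derivative into the fBm part $\partial^2 R_H/\partial s\,\partial t$ and a remainder $\Phi(s,t)$. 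The fBm piece contributes exactly the variance of the corresponding exponentially weighted fBm increment, which by the standard computation in \cite{Es-Sebaiy2019} for $G_t=B^H_t$ is bounded by $C\Delta_n^{2H}e^{-2\theta t_{i-1}}$; this establishes the main term.

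The main obstacle is controlling the remainder $\iint e^{-\theta(s+t)}\Phi(s,t)\,ds\,dt$, because the bound $|\Phi(s,t)|\le C'_H|st|^{H-1}$ from \eqref{phi} is singular at the origin when $H<\tfrac12$. For $i\ge 2$ one has $s,t\ge t_{i-1}>0$, so $s^{H-1}t^{H-1}$ can be uniformly bounded on the square $[t_{i-1},t_i]^2$ by a polynomial in $t_{i-1}$, and pulling out the factor $e^{-2\theta t_{i-1}}$ together with integration of the exponential on $[t_{i-1},t_i]^2$ yields a bound of order $\Delta_n^{2}\,t_{i-1}^{2(H-1)}e^{-2\theta t_{i-1}}$, which is dominated by (a polynomial multiple of) the fBm main term. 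For the boundary case $i=1$ one computes directly $\int_0^{\Delta_n}\!\int_0^{\Delta_n}|st|^{H-1}ds\,dt = \Delta_n^{2H}/H^2$, producing the correct $\Delta_n^{2H}$ order without any singular growth. Combining the two regimes yields the required variance estimate uniformly in $i$, at which point Theorems on strong consistency and on tightness/non-tightness of \cite{Es-Sebaiy2019} apply and deliver \eqref{strong12}, \eqref{asymptotic2} and \eqref{asymptotic21}.
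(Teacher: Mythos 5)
Your overall strategy coincides with the paper's: verify the hypotheses of the general discrete-observation theorems of \cite{Es-Sebaiy2019} (restated here as Theorem \ref{ELtheorem}), with the main work being the increment bound $E[(\zeta_{t_i}-\zeta_{t_{i-1}})^2]\le C\Delta_n^{2H}e^{-2\theta t_i}$, and then conclude \eqref{strong12}, \eqref{asymptotic2}, \eqref{asymptotic21} by citation. Your treatment of the remainder $\Phi$ is correct (and your case split $i=1$ versus $i\ge 2$ is avoidable: $\int_{t_{i-1}}^{t_i}u^{H-1}\,\dif u=(t_i^H-t_{i-1}^H)/H\le\Delta_n^H/H$ handles all $i$ uniformly, which is how Lemma \ref{assum3} proceeds). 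However, there are two genuine gaps.

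First, your starting identity
\begin{align*}
E\!\left[(\zeta_{t_i}-\zeta_{t_{i-1}})^2\right]=\int_{t_{i-1}}^{t_i}\!\int_{t_{i-1}}^{t_i}e^{-\theta(s+t)}\,\frac{\partial^2 R(s,t)}{\partial s\,\partial t}\,\dif s\,\dif t
\end{align*}
is not available when $H\in(0,\tfrac12)$: the square $[t_{i-1},t_i]^2$ contains the diagonal, and there $\partial^2R_H/\partial s\partial t\sim H(2H-1)|t-s|^{2H-2}$ with $2H-2<-1$, so neither $\partial^2R/\partial s\partial t$ nor its fBm part is absolutely integrable; Hypothesis \ref{hypthe1}(2) only gives integrability of the \emph{first} derivative $\partial R/\partial t$, and the paper explicitly remarks that the naive second-derivative representation \eqref{neiji5} fails for $H<\tfrac12$. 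The decomposition into ``fBm part plus $\Phi$'' must therefore be performed at the level of the signed-measure representation \eqref{neiji3} and the inequality \eqref{neiji4}, i.e.\ $\langle m,m\rangle_{\mathfrak{H}}\le\langle m,m\rangle_{\mathfrak{H}_1}+\langle m,m\rangle_{\mathfrak{H}_2}$ with $m=e^{-\theta\cdot}\mathbf{1}_{[t_{i-1},t_i)}$, where $\langle m,m\rangle_{\mathfrak{H}_1}$ is evaluated via the integration-by-parts formula of Corollary \ref{tuilunneiji} (producing the boundary terms estimated in Lemma \ref{lemmaassum3}) rather than as a double integral of $\partial^2R_H/\partial s\partial t$. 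Second, the framework you invoke does not rest on ``a single assumption'': Theorem \ref{ELtheorem} also requires H\"older continuity of the paths of $G$ and the growth bound $E(G_t^2)\le ct^{2\gamma}$, which the paper supplies through Lemma \ref{strong1} and Kolmogorov's criterion; your proposal omits this verification entirely. Both gaps are repairable with the machinery of Proposition \ref{main prelim}, but as written the argument does not cover $H<\tfrac12$ and does not fully discharge the hypotheses of the theorem being cited.
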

Next, we give some procedures that satisfy the Hypothesis ~\ref{hypthe1}.
\begin{example}
The fBm $\{B_t^H, t\geq 0\}$ with parameter $H \in (0,1)$ has the covariance function
\begin{equation}\label{fbm}
R_{H}(s,t)= \frac{1}{2}(s^{2H} + t^{2H} - |t-s|^{2H}).
\end{equation}
\end{example}

\begin{example}
\cite{Tomasz2004} The sub-fractional Brownian motion $\{S_t^H, t \geq 0\}$ with parameter $H \in (0,1)$ has the covariance function
$$R(s,t)=s^{2H}+t^{2H}-\frac{1}{2}\left((s+t)^{2H}+|t-s|^{2H}\right).$$
\end{example}

\begin{example}
\cite{Lei2009,Bardina2011} The bi-fractional Brownian motion $\{B_t^{H,K}, t\geq 0\}$ with parameter $H \in (0, 1), K \in (0, 2)$ and $HK\in (0,1)$ has the covariance function
$$R(s,t)=\frac{1}{2^K}\left((s^{2H}+t^{2H})^K - |t-s|^{2HK}\right) .$$
\end{example}

\begin{example}
\cite{Sgha2013} The generalized sub-fractional Brownian motion $\{S_t^{H,K},t \ge 0 \} $ with parameter $H \in (0, 1), K \in (0, 2)$ and $HK\in (0,1)$ has the covariance function
$$ R(s, t)= (s^{2H}+t^{2H})^{K}-\frac12 \big[(t+s)^{2HK} + |t-s|^{2HK} \big].$$
\end{example}

\begin{example}
\cite{Bojdecki2007} The gaussian process $\{G_t , t\ge 0\} $ with parameter $H \in (0, 1)$ has the covariance function
$$ R(s, t)= (t+s)^{2H} - |t-s|^{2H} .$$
\end{example}

\begin{example}
\cite{Talarczyk2020} The gaussian process $\{G_t , t\ge 0\} $ with parameter $\gamma \in (0, 1)$ has the covariance function
$$ R(s, t)= (\max(s,t))^{\gamma} - |t-s|^{\gamma} .$$
\end{example}

\begin{example}
\cite{Houdre2003} The gaussian process $\{G_t , t\ge 0\} $ with parameter $H \in (0, 1),\,K \in(0,1)$ has the covariance function
$$ R(s, t)= \frac 12 \left[t^{H}+s^{H}-K(t+s)^{H} -(1-K) |t-s|^{ H}\right] .$$
\end{example}

Below are some procedures that do not satisfy the Hypothesis ~\ref{hypthe1}.
\begin{example}
\cite{Bojdecki2007} The weighted fractional Brown motion $\{B_t^{a,b}, t \geq 0\}$ with parameter $a>-1,|b|<1$ and $|b|<a+1$ has the covariance function
$$R(s,t)=\int_{0}^{s\wedge t}u^a \left[ (t-u)^b +(s-u)^b  \right] \mathrm{d} u.$$
It is known from \cite{Alsenafi2021} that can be reduced to
$$R(s,t)=\beta(a+1,b+1)[t^{a+b+1}+s^{a+b+1}]-\int_{s\wedge t}^{s\vee t}u^a (t \vee s -u)^b \mathrm{d} u,$$ where $\beta(\cdot,\cdot)$ denotes the beta function. $(s\wedge t)^{-a}$ of its second mixed partial derivative coincides with fBm, so it does not satisfy Hypothesis \ref{hypthe1}; \cite{Alsenafi2021} gives a hypothesis to prove the statistical inference of LSE based on continuous and discrete time observations of $X$, which is different from the hypothesis in \cite{El2016} and \cite{Es-Sebaiy2019}.
\end{example}

\begin{example}
\cite{Lei2009,Bardina2009} The gaussian process $\{G_t , t\ge 0\} $ with parameter $H \in (0, 1),\,K \in(0,2)$ has the covariance function
$$X_{t}^{K} = \int_{0}^{\infty} \left( 1-e^{-r t} \right) r^{-\frac {1+K}{2}} \mathrm{d} W_{r} ,\quad t\ge 0 ,$$  where $\{W_t,t \ge 0 \}$ is the standard Brownian motion. The covariance function is
\begin{equation*}
R_{K}(s,t)=\mathrm{Cov} (X_{s}^{K},X_{t}^{K})=\left\{
      \begin{array}{ll}
\frac {\Gamma(1-K)}{K}[t^{K}+s^{K} -(t+s)^{K}], & \quad  K\in (0,1) ;\\
\frac {\Gamma(2-K)}{K(K-1)}[(t+s)^{K}-t^{K}-s^{K} ], &\quad K\in (1,2).
 \end{array}
\right.
\end{equation*}The second order mixed partial derivative is only to $| ts | ^ {H - 1} $i s bounded part, so don't say to meet \ref {hypthe1}; and when $T \to \infty$, the asymptotic variance of the solution $X_T$ in the case of the O-U process $X$ traversal is 0, which does not satisfy the $\sigma_G >0$ of \cite{El2016}{$(\mathcal{H}3)$}. However, it satisfies three hypotheses in \cite{Es-Sebaiy2019}.
\end{example}

\begin{example}
\cite{Talarczyk2020} The gaussian process $\{G_t , t\ge 0\} $ with parameter $\gamma \in (0, 1)$ has the covariance function
$$ R(s, t)= (t+s)^{\gamma}-(\max(s,t))^{\gamma}.$$
The second order mixed partial derivative is only to $| ts | ^ {H - 1} $ is bounded part, so don't say to meet Hypotheses \ref{hypthe1}; and when $T \to \infty$, the asymptotic variance of the solution $X_T$ in the case of the O-U process $X$ traversal is 0, which does not satisfy the $\sigma_G >0$ of \cite{El2016}{$(\mathcal{H}3)$}. However, it satisfies three hypotheses in \cite{Es-Sebaiy2019}.
\end{example}

\begin{remark}
Through the examples listed above, found that based on $X $ discrete observations, even if the covariance function of second order mixed partial derivative is only to $| ts | ^ {H - 1} $is bounded part, also meet \cite{Es-Sebaiy2019} in the three hypotheses. However, based on the continuous observation of $X$, we do not find any Gaussian process satisfying the four assumptions in \cite{El2016} but not the Hypothesis ~\ref{hypthe1}. In addition, if the model \eqref{OUmedel} is driven by a linear combination of independent zero-mean Gaussian processes, each of which satisfies the Hypothesis \ref{hypthe1}, The estimator $\tilde{\theta}_T$ based on continuous time observation $X$ has strong consistency and asymptotic distribution. And based on the discrete time $X $ observation estimator $\hat{\theta}_n$ and $\check{\theta}_n$ has strong consistency and random sequence $\sqrt{T_n}(\hat{\theta}_n - \theta)$ and $\sqrt{T_n}(\check{\theta}_n - \theta)$is tight.
\end{remark}

\section{Preliminary}
Denote $G = \{G_t, t\in [0,T] \}$ as a continuous centered Gaussian process with covariance function
$$ E (G_tG_s)=R(s,t), \ s, t \in [0,T], $$
defined on a complete probability space $(\Omega, \mathcal{F}, P)$. The filtration $\mathcal{F}$ is generated by the Gaussian family $G$. Suppose, in addition, that the covariance function $R$ is continuous. Let $\mathcal{E}$ denote the space of all real valued step functions on $[0,T]$. The Hilbert space $\mathfrak{H}$ is defined as the closure of $\mathcal{E}$ with the inner product
\begin{align*}
\langle \mathbf{1}_{[a,b)},\,\mathbf{1}_{[c,d)}\rangle _{\mathfrak{H}}=E \left(( G_b-G_a) ( G_d-G_c) \right).
\end{align*}
Where $\mathbf{1}_{[a,b)}$ represents the indicator function of $[a,b)$. We denote $G=\{G(h), h \in \mathfrak{H}\}$ as the isonormal Gaussian process on the probability space $(\Omega, \mathcal{F}, P)$, indexed by the elements in the Hilbert space $\mathfrak{H}$. In other words, $G$ is a Gaussian family of random variables such that
$$\mathbb{E}(G(h)) = 0, \quad \mathbb{E}(G(g)G(h)) = \langle g, h \rangle_{\mathfrak{H}} .$$
for any $g, h \in \mathfrak{H}$.

\begin{notation}\label{notation1}
Let $R_H(s,t)$ be the covariance function of the fractional Brownian motion as in \eqref{fbm}. $\mathfrak{H}_1$ is the separable Hilbert space associated with fBm. $\mathcal{V}_{[0,T]}$ denote the set of bounded variation function on $[0,T]$. For function $f,g \in \mathcal{V}_{[0,T]}$, we define two products as
\begin{align}\label{neiji11}
\langle f,g \rangle_{\mathfrak{H}_1} = - \int_{[0,T]^2} f(t) \frac{\partial R_H (s,t)}{\partial t} \mathrm{d} t \nu_{g}( \mathrm{d} s),
\end{align}
\begin{align}\label{neiji12}
\langle f,g\rangle_{\mathfrak{H}_2} = C'_{H} \int_{[0,T]^2} |f(t)g(s)| (ts)^{H-1} \mathrm{d} t \mathrm{d} s,
\end{align}where $\nu_{g}$ is given below.
\end{notation}

The following proposition is an extension of Theorem 2.3 of \cite{Jolis2007} and from Proposition 2.1 of \cite{Chending2021}, which gives the products representation of the Hilbert space $\mathfrak{H}$:
\begin{proposition}\label{main prelim}
$\mathcal{V}_{[0,T]}$ denote the set of bounded variation function on $[0,T]$. Then, $\mathcal{V}_{[0,T]}$ is dense in $\mathfrak{H}$ and we have 	\begin{align} \label{innp fg30}
\langle f,g \rangle _{\mathfrak{H}} &=\int_{[0,T]^2} R(s,t) \nu_f( \mathrm{d} t) \nu_{g}( \mathrm{d} s), \quad \forall f,g \in \mathcal{V}_{[0,T]},
\end{align}
where $\nu_{g}$ is the restriction to $([0,T],\mathcal{B}([0,T]))$ of the Lebesgue-Stieljes signed measure associated with $g^0$ define as
\begin{equation*}
g^0(x)=\left\{
      \begin{array}{ll}
 g(x), & \quad \text{if }~ x\in [0,T] ;\\
0, &\quad \text{otherwise }.
 \end{array}
\right.
\end{equation*}In addition, if the covariance function $R(s,t)$ satisfies the Hypothesis ~\ref{hypthe1}, then
\begin{align}\label{neiji3}
\langle f,g \rangle _{\mathfrak{H}} = - \int_{[0,T]^2} f(t) \frac{\partial R(s,t)}{\partial t} \mathrm{d} t \nu_{g}( \mathrm{d} s),
\end{align}and
\begin{align}\label{neiji4}
|\langle f,g \rangle_{\mathfrak{H}}-\langle f,g \rangle_{\mathfrak{H}_1} | \le \langle f,g \rangle_{\mathfrak{H}_2}.
\end{align}
\end{proposition}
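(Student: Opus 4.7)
The first claim---density of $\mathcal{V}_{[0,T]}$ in $\mathfrak{H}$ together with the representation \eqref{innp fg30}---is not something I would reprove from scratch; I would invoke Theorem 2.3 of \cite{Jolis2007} and Proposition 2.1 of \cite{Chending2021} verbatim, since the hypotheses there require only continuity of the covariance kernel $R$, which we have. The novelty under Hypothesis~\ref{hypthe1} lies in the two derivative-level identities \eqref{neiji3} and \eqref{neiji4}, so my plan concentrates on these.

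For \eqref{neiji3}, the plan is to start from the representation \eqref{innp fg30} and perform a one-variable integration by parts in $t$ inside the inner integral $\int_{[0,T]} R(s,t)\,\nu_f(\mathrm{d}t)$ for each fixed $s\in[0,T]$. By Hypothesis~\ref{hypthe1}(2), $t\mapsto R(s,t)$ is continuous on $[0,T]$ and absolutely continuous, with $\partial_t R(s,\cdot)\in L^1[0,T]$, which validates the Lebesgue--Stieltjes integration by parts. The boundary term at $t=0$ vanishes thanks to Hypothesis~\ref{hypthe1}(1), $R(s,0)=0$, while the boundary contribution at $t=T$ is absorbed into the convention for $\nu_f$ (the Lebesgue--Stieltjes measure of the extension $f^0$, whose drop from $f(T)$ to $0$ at $T$ carries the compensating mass $-f(T)$). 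This yields $\int_{[0,T]}R(s,t)\,\nu_f(\mathrm{d}t) = -\int_0^T f(t)\,\partial_t R(s,t)\,\mathrm{d}t$; an application of Fubini (justified by the absolute integrability of $\partial_t R$ against the bounded-variation measure $\nu_g$) then delivers \eqref{neiji3}.

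For \eqref{neiji4}, the plan is to split $\partial_t R = \partial_t R_H + \partial_t(R-R_H)$. Plugging this decomposition into \eqref{neiji3} gives
\[
\langle f,g\rangle_{\mathfrak{H}} - \langle f,g\rangle_{\mathfrak{H}_1} = -\int_{[0,T]^2} f(t)\,\partial_t(R-R_H)(s,t)\,\mathrm{d}t\,\nu_g(\mathrm{d}s),
\]
where the first piece is identified via \eqref{neiji11}. I would then run a second integration by parts, this time in $s$, on the function $s\mapsto \partial_t(R-R_H)(s,t)$ for each fixed $t$; Hypothesis~\ref{hypthe1}(3) supplies exactly the differentiability in $s$ and identifies the derivative as $\Phi(s,t)$. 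The boundary term at $s=0$ is killed because $R(0,t)=R_H(0,t)=0$ (by Hypothesis~\ref{hypthe1}(1) and symmetry, so the $t$-derivative at $s=0$ vanishes too), and the term at $s=T$ is again absorbed by the convention for $\nu_g$. This produces the double integral $\int_{[0,T]^2} f(t)\,g(s)\,\Phi(s,t)\,\mathrm{d}s\,\mathrm{d}t$. Taking absolute values under the integral and invoking the pointwise bound \eqref{phi} gives the desired dominance by $\langle f,g\rangle_{\mathfrak{H}_2}$ defined in \eqref{neiji12}. The main obstacle I anticipate is the bookkeeping of the endpoint terms in the two successive integrations by parts---particularly handling the non-vanishing jumps at $T$ consistently with the chosen convention for the signed measures $\nu_f,\nu_g$---and making sure Fubini is legitimate at each exchange, which ultimately reduces to checking that the iterated absolute integrals are finite for $f,g$ of bounded variation combined with the $|ts|^{H-1}$ bound on $\Phi$.
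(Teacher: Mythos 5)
Your proposal is correct and takes essentially the route the paper intends: the paper states this proposition without proof, deferring to Theorem 2.3 of \cite{Jolis2007} and Proposition 2.1 of \cite{Chending2021}, and your two successive Lebesgue--Stieltjes integrations by parts --- in $t$ using Hypothesis~\ref{hypthe1}(1)--(2) with the jump of $f^0$ at $T$ cancelling the boundary term, then in $s$ using Hypothesis~\ref{hypthe1}(3) followed by the pointwise bound \eqref{phi} --- is exactly the argument underlying those references. The one step worth writing out is the Fubini exchange before the $s$-integration by parts, which is justified by $\left|\partial_t(R-R_H)(s,t)\right|=\left|\int_0^s\Phi(u,t)\,\mathrm{d}u\right|\le \frac{C'_H}{H}\,s^{H}t^{H-1}$, a bound you have implicitly via \eqref{phi} and $\partial_t(R-R_H)(0,t)=0$.
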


\begin{corollary}\label{tuilunneiji}
Denote by $\delta_a(\cdot)$ the Dirac $\delta$ function centered at a point $a$. Let $f=h_1 \cdot \mathbf{1}_{[a,b)}(\cdot)$, $g=h_2 \cdot \mathbf{1}_{[c,d)}(\cdot)$ , with $h_1$ and $h_2$ are continuously differentiable function. Then we have
\begin{align}\label{neiji}
\langle f,g \rangle_{\mathfrak{H}} &=-\int_{[0,T]^2} h_1(t)\mathbf{1}_{[a,b)}(t) h_2'(s) \mathbf{1}_{[c,d)}(s) \frac{\partial R(s,t)}{\partial t} \mathrm{d} t \mathrm{d} s \nonumber  \\
&+ \int_{[0,T]^2} h_1(t)\mathbf{1}_{[a,b)}(t) h_2(s) \frac{\partial R(s,t)}{\partial t} [\delta_b(s) - \delta_a(s)] \mathrm{d} t \mathrm{d} s.
\end{align}
\end{corollary}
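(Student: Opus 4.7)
My plan is to apply formula \eqref{neiji3} of Proposition \ref{main prelim} directly, with $f(t)=h_1(t)\mathbf{1}_{[a,b)}(t)$ and $g(s)=h_2(s)\mathbf{1}_{[c,d)}(s)$. Since \eqref{neiji3} already casts the $t$-integration as an ordinary Lebesgue integral against $\partial R/\partial t$, the whole task reduces to identifying the Lebesgue--Stieltjes signed measure $\nu_g$ associated with the extension $g^0 = h_2\mathbf{1}_{[c,d)}$ and substituting it into \eqref{neiji3}.

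The first step is to decompose $\nu_g$. On the open interval $(c,d)$ the function $g^0$ is continuously differentiable with derivative $h_2'$, so $\nu_g$ has Lebesgue density $h_2'(s)\mathbf{1}_{(c,d)}(s)$ there. At the endpoints of the support of $g^0$, the function jumps from $0$ up to $h_2(c)$ at $s=c$ and from $h_2(d)$ back down to $0$ at $s=d$ (because of the half-open convention and the extension by zero outside $[0,T]$), producing point masses $h_2(c)\,\delta_c$ and $-h_2(d)\,\delta_d$ respectively. Hence
\begin{equation*}
\nu_g(\mathrm{d} s) \;=\; h_2'(s)\mathbf{1}_{(c,d)}(s)\,\mathrm{d} s \;+\; h_2(c)\,\delta_c(\mathrm{d} s) \;-\; h_2(d)\,\delta_d(\mathrm{d} s).
\end{equation*}

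The second step is to substitute this decomposition into \eqref{neiji3}. Hypothesis \ref{hypthe1}(2) guarantees that $\partial R(s,t)/\partial t$ exists and is absolutely integrable in $t$, which legitimizes both the appeal to \eqref{neiji3} and the term-by-term separation via Fubini. The absolutely continuous piece of $\nu_g$ reproduces the first double integral in the claimed identity. The two Dirac pieces collapse the $s$-integration to point evaluations at the endpoints of the support of $g$; reassembled formally as an integral against $h_2(s)\bigl[\delta_d(s)-\delta_c(s)\bigr]$, they give the boundary contribution in the stated formula.

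The delicate part is precisely the endpoint bookkeeping: one has to track the half-open convention on $[c,d)$ and the extension of $g$ by zero outside $[0,T]$ to read off the correct weights and signs of the two Dirac masses of $\nu_g$. This is really the main (and essentially only) obstacle, since everything else is a direct unpacking of Proposition \ref{main prelim}.
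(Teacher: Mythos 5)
Your derivation is correct and is exactly the intended argument: the paper states this corollary without proof, as an immediate consequence of \eqref{neiji3}, and the whole content is indeed the decomposition of $\nu_g$ for $g^0=h_2\mathbf{1}_{[c,d)}$ into the absolutely continuous part $h_2'(s)\mathbf{1}_{(c,d)}(s)\,\mathrm{d}s$ plus the atoms $h_2(c)\delta_c-h_2(d)\delta_d$ coming from the jumps at the endpoints. One point you gloss over: your computation produces the boundary factor $h_2(s)\bigl[\delta_d(s)-\delta_c(s)\bigr]$, i.e.\ Dirac masses at the endpoints of the support of $g$, whereas the corollary as printed writes $\delta_b(s)-\delta_a(s)$; these agree only when $[a,b)=[c,d)$, which happens to hold in every later application (Lemmas \ref{assum2} and \ref{assum3}), so the printed $a,b$ is evidently a typo that your argument silently corrects --- you should state this explicitly rather than assert that your expression matches the formula as written.
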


\begin{remark}
The inequality \eqref{neiji4} is the starting point of the present paper, which. when $H\in (\frac 12 ,1)$, Hypothesis \ref{hypthe1} (3) imply that the identity \eqref{neiji3} can be rewritten as
\begin{align}\label{neiji5}
\langle f,g \rangle_{\mathfrak{H}} =\int_{[0,T]^2} f(t)g(s) \frac{\partial^2 R(s,t)}{\partial t \partial s} \mathrm{d} t \mathrm{d} s \quad \forall f,g \in \mathcal{V}_{[0,T]}.
\end{align}In the case, inequality \eqref{neiji4} has been obtained in \cite{Chenzhou2021}. when $H \in (0,\frac 12)$, it is well known that both $\frac{\partial^2 R(u,v)}{\partial u \partial v}$  and $\frac{\partial^2 R_H (u,v)}{\partial u \partial v}$ are not absolutely integrable. But the absolute integrability of their difference makes the key inequality \eqref{neiji4} still valid.
\end{remark}

However, if $f$ and $g$ support on the disjoint interval, the formula \eqref{neiji5} also holds in $H \in(0, frac 12)$ when the process $g$ is fBm. The lemma given below is for Lemma 2.1 of \cite{Cheridito2003} extension, proof method is the same as formula \eqref{neiji4}.
\begin{lemma}\label{aassum3}
Let $f,g$ be bounded variances supported on $[a,b]$ and $[c,d]$ respectively. when $H\in (0,\frac 12)\cup (\frac 12,1)$ and $0 \le a<b \le c<d<\infty$, we have
$$\langle f,g \rangle_{\mathfrak{H}}=\int_{c}^d \int_{a}^b f(t)g(s)\frac{\partial^2 R(s,t)}{\partial t \partial s} \mathrm{d} t \mathrm{d} s.$$
In addition, if the covariance function $R(s,t)$ satisfies the Hypothesis \ref{hypthe1}, then
$$|\langle f,g \rangle_{\mathfrak{H}}|\le \int_{c}^d \int_{a}^b |f(t)g(s)|\left[C_H(s-t)^{2H-2}+C'_H(ts)^{H-1} \right] \mathrm{d} t \mathrm{d} s,$$ where $C_H=|H(2H-1)|$ .
\end{lemma}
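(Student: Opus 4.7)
The plan is to start from the representation in Proposition \ref{main prelim}, namely
\begin{equation*}
\langle f,g\rangle_{\mathfrak{H}} = -\int_{[0,T]^2} f(t)\,\frac{\partial R(s,t)}{\partial t}\,\mathrm{d}t\,\nu_g(\mathrm{d}s),
\end{equation*}
and convert it to a genuine double Lebesgue integral against $\partial^2 R/\partial t\partial s$ by integrating by parts in $s$. The key structural observation is that $f$ lives on $[a,b]$ while $\nu_g$ is supported on $[c,d]$ with $b\le c$, so the integration region $[a,b]\times[c,d]$ is bounded away from (or at worst touches at a single point of) the diagonal $\{s=t\}$. This is precisely what will let us handle the regime $H\in(0,\tfrac{1}{2})$, where $\partial^2R_H/\partial t\partial s$ is non-integrable near the diagonal.

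First I would establish the identity. Treating $g^{0}$ as a BV function vanishing outside $[c,d]$, the Lebesgue-Stieltjes measure $\nu_{g}$ splits into an atomic part at the endpoints (masses $g(c)$ at $s=c$ and $-g(d)$ at $s=d$) and an absolutely continuous / internal part associated with $g$ on $(c,d)$. Hypothesis \ref{hypthe1} (3) tells us that $\partial R(s,t)/\partial t$ is differentiable in $s$ on $(c,d)$ with an absolutely integrable derivative $\partial^{2}R(s,t)/\partial s\partial t$ over $[a,b]\times[c,d]$ (integrability is automatic here because we are away from $s=t$). A Fubini–Stieltjes integration by parts in $s$ then moves the differential from $\nu_{g}$ onto $\partial R/\partial t$; the boundary atoms cancel precisely the endpoint terms produced by integration by parts, since $R(s,0)=0$ plays no role and the supports are disjoint. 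What remains is exactly
\begin{equation*}
\langle f,g\rangle_{\mathfrak{H}}=\int_c^d\int_a^b f(t)g(s)\,\frac{\partial^{2} R(s,t)}{\partial t\partial s}\,\mathrm{d}t\,\mathrm{d}s.
\end{equation*}

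For the upper bound, I would invoke Hypothesis \ref{hypthe1} (3) to write
\begin{equation*}
\frac{\partial^{2} R(s,t)}{\partial t\partial s}=\frac{\partial^{2} R_H(s,t)}{\partial t\partial s}+\Phi(s,t),
\end{equation*}
use the explicit formula $\partial^{2}R_H(s,t)/\partial t\partial s=H(2H-1)|t-s|^{2H-2}$ valid off the diagonal (both for $H>\tfrac12$ and $H<\tfrac12$), and substitute $|\Phi(s,t)|\le C'_H(ts)^{H-1}$. Taking absolute values inside the integral and applying the triangle inequality yields exactly the bound with $C_H=|H(2H-1)|$.

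The main obstacle I anticipate is the rigorous justification of the integration by parts step when $H\in(0,\tfrac12)$, since $\partial^{2}R_H/\partial t\partial s$ is a signed, non-integrable object on $[0,T]^{2}$ in this regime. The way around this is to exploit the gap $b\le c$: on $[a,b]\times[c,d]$ the function $\partial^{2}R/\partial t\partial s$ is continuous and bounded, so all Fubini-type manipulations and Stieltjes endpoint evaluations are standard. This disjoint-support hypothesis is really what extends Lemma 2.1 of \cite{Cheridito2003} (stated for fBm) to the class of Gaussian processes captured by Hypothesis \ref{hypthe1}, with the fBm singular part treated exactly as before and the $\Phi$-remainder absorbed into the $C'_H(ts)^{H-1}$ term.
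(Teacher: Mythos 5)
Your approach is essentially the paper's intended one: the paper gives no proof of this lemma at all, saying only that it extends Lemma 2.1 of \cite{Cheridito2003} and is proved ``by the same method'' as \eqref{neiji4} --- namely, integration by parts in $s$ starting from the representation \eqref{neiji3}, followed by the decomposition $\frac{\partial^2 R}{\partial t\,\partial s}=H(2H-1)|t-s|^{2H-2}+\Phi(s,t)$ and the bound \eqref{phi}, exactly as you propose. One small correction to your justification: the lemma permits $b=c$, in which case $\frac{\partial^2 R}{\partial t\,\partial s}$ is \emph{not} bounded on $[a,b]\times[c,d]$ (the fBm part blows up like $(s-t)^{2H-2}$ at the corner $(b,b)$); it is nonetheless still absolutely integrable over the rectangle, since $2H-2>-2$, and integrability rather than boundedness is all that the Fubini and integration-by-parts steps require.
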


The following theorem is a description of the main results in the references \cite{El2016} and \cite{Es-Sebaiy2019}.
\begin{thm}\label{ELtheorem}
In model \eqref{OUmedel}, the following conditions are given for process $G$:
\begin{enumerate}
  \item[(1)] The process $G$ has  H\"{o}lder continuous paths of order $\delta \in (0,1]$.
  \item [(2)]for every $t \ge 0$ , $E(G_t^2) \le c t^{2\gamma}$ for some positive constants $c$ and $\gamma$.
  \item [(3)]The limiting variance of $e^{-\theta T} \int_{0}^T e^{\theta s} \dif G_s$ exist and strictly positive as $T \to \infty$, i.e., there exists a constant $\sigma_G >0$ such that
     \begin{align*}
      \lim_{T \to \infty} E \left[ \left(e^{-\theta T} \int_{0}^T e^{\theta s} \dif G_s \right)^2 \right] = \sigma_G^2.
     \end{align*}
  \item [(4)] For all fixed $s\in[0,T)$,
     \begin{align*}
      \lim_{T \to \infty} E \left( G_s e^{-\theta T} \int_{0}^T e^{\theta r }\dif G_r \right) =0.
     \end{align*}
  \item [(5)] For each $i=1,\cdots,n$ and $n \ge 1$ , existence of normal number $\gamma$, have
$$E \left[ (\zeta_{t_i}-\zeta_{t_{i-1}})^2 \right]\le C(\gamma)\Delta_n^{2\gamma} e^{-2\theta t_i} . $$
\end{enumerate}If the process $G$ satisfy conditions $(1)$ and $(2)$, the strong consistency \eqref{strong11} holds; If the process $G$ satisfy conditions $(1)-(4)$, the asymptotic distribution \eqref{asymptotic1} holds; If the process $G$ satisfy conditions $(1)$, $(2)$ and $(5)$, then the strong consistency \eqref{strong12} and the random sequence compactness \eqref{asymptotic2},
\eqref{asymptotic21} holds.
\end{thm}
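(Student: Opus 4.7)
\bigskip
\noindent\textbf{Proof proposal for Theorem \ref{ELtheorem}.}

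Since the statement collects results already established in \cite{El2016} and \cite{Es-Sebaiy2019}, my plan is to reproduce the skeleton of those proofs, organised so that each hypothesis (1)--(5) is used where it enters. The starting identity comes from applying a Young-type chain rule to $X_t^2$ using $\dif X_t = \theta X_t\dif t + \dif G_t$, which gives $X_T^2 = 2\theta\int_0^T X_s^2\,\dif s + 2\int_0^T X_s\,\dif G_s$ and hence
\begin{equation*}
\tilde\theta_T - \theta \;=\; \frac{\int_0^T X_s\,\dif G_s}{\int_0^T X_s^2\,\dif s}.
\end{equation*}
The chain rule is justified because condition (1) ensures $G$ has H\"older paths of order $\delta$, and the representation \eqref{solution1}--\eqref{solution2} then shows $X$ has the same regularity, so the Young integral $\int_0^T X_s\,\dif G_s$ is well defined.

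For the strong consistency \eqref{strong11} I would first show $e^{-2\theta T}\int_0^T X_s^2\,\dif s\to Z_\infty^2/(2\theta)$ almost surely; this is obtained by writing $X_t = e^{\theta t}\zeta_t$ with $\zeta_t$ as in \eqref{solution12}, integrating by parts to turn $\zeta_t$ into $\theta Z_t\, e^{-\theta t}$ plus boundary terms, and then using condition (2) via Borell-type Gaussian concentration to control the remainder $e^{-\theta t}\zeta_t$ uniformly in $t$. In parallel I would bound the numerator: an integration by parts on $\int_0^T X_s\,\dif G_s$ combined with (1) and (2) shows that it is $o(e^{2\theta T})$ almost surely, which yields \eqref{strong11}. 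For the asymptotic distribution \eqref{asymptotic1}, the key decomposition is
\begin{equation*}
e^{\theta T}\bigl(\tilde\theta_T-\theta\bigr)
\;=\;\frac{e^{-\theta T}X_T^2 - 2\theta\,e^{-\theta T}\int_0^T X_s^2\,\dif s}{2\,e^{-2\theta T}\int_0^T X_s^2\,\dif s},
\end{equation*}
and after further algebra the numerator reduces to a linear combination of $\zeta_T=e^{-\theta T}\int_0^T e^{\theta s}\,\dif G_s$ and terms whose covariance with $\zeta_T$ vanishes thanks to condition (4). Condition (3) guarantees $\zeta_T\Rightarrow \mathcal N(0,\sigma_G^2)$, and the ratio $\zeta_T/\sqrt{Z_\infty^2}$ converges in law (jointly, with asymptotic independence from (4)) to $\sigma_G\,\mathcal C(1)/\sqrt{E(Z_\infty^2)}$; the factor $2$ comes from the numerator algebra. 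This is the part I expect to be the main obstacle, because one must rigorously establish the asymptotic independence of $\zeta_T$ from the limiting denominator $Z_\infty$, which is precisely what hypothesis (4) is designed to provide.

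For the discrete-time statements in \eqref{strong12}--\eqref{asymptotic21}, the plan is to compare $\hat\theta_n$ and $\check\theta_n$ with their continuous counterparts. Writing
\begin{equation*}
X_{t_i}-X_{t_{i-1}} \;=\; \theta\!\int_{t_{i-1}}^{t_i} X_s\,\dif s + (G_{t_i}-G_{t_{i-1}}),
\end{equation*}
I would express $\hat\theta_n-\theta$ as $\theta$ times a Riemann-sum error (controlled by (1) and the exponential growth of $X$) plus a noise term $\Delta_n^{-1}\sum_i X_{t_{i-1}}(G_{t_i}-G_{t_{i-1}})/\sum_i X_{t_{i-1}}^2$. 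Using $X_{t_{i-1}} = e^{\theta t_{i-1}}\zeta_{t_{i-1}}$, hypothesis (5) gives $\mathrm{Var}(\zeta_{t_i}-\zeta_{t_{i-1}})\le C\Delta_n^{2\gamma}e^{-2\theta t_i}$, so the noise sum has a geometrically decaying variance that is summable under $n\Delta_n^{1+\alpha}\to\infty$; a Borel-Cantelli argument then delivers a.s.\ convergence to zero. The same representation shows that $\check\theta_n - \hat\theta_n$ is of the same small order. The negative result \eqref{asymptotic2} follows from noting that, on the event where $\zeta_\infty\neq 0$, the rescaled error $e^{\theta T_n}(\hat\theta_n-\theta)$ already converges in law to a non-degenerate limit, so multiplying by any positive power of $\Delta_n$ or $e^{\theta T_n}$ destroys tightness. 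Finally, the tightness of $\sqrt{T_n}(\hat\theta_n-\theta)$ under $n\Delta_n^3\to 0$ will follow by showing that the discretisation bias is $O(\Delta_n)$ in this regime and that $\sqrt{T_n}$ times the martingale-like noise sum has bounded second moment, again thanks to (5).
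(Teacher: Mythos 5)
The paper does not actually prove Theorem \ref{ELtheorem}: it is introduced as ``a description of the main results in the references \cite{El2016} and \cite{Es-Sebaiy2019}'' and is used as a black box in the proofs of Theorems \ref{strongthm} and \ref{strongthm2}. So there is no in-paper argument to compare against; what you have written is a reconstruction of the proofs in those two references. Most of it is faithful to how they argue: the identity $\tilde\theta_T-\theta=\int_0^T X_s\,\dif G_s\big/\int_0^T X_s^2\,\dif s$, the a.s.\ convergence of $e^{-2\theta T}\int_0^T X_s^2\,\dif s$, the fact that $e^{-\theta T}\int_0^T e^{\theta s}\,\dif G_s$ is Gaussian so that (3) gives a normal limit and (4) gives asymptotic independence from the denominator, producing the Cauchy ratio; and the Borel--Cantelli treatment of the discrete noise sums under (5).

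There is, however, one genuinely broken step: your argument for the non-tightness claim \eqref{asymptotic2} is logically backwards. You assert that $e^{\theta T_n}(\hat\theta_n-\theta)$ converges in law to a non-degenerate limit and conclude that multiplying by powers of $\Delta_n$ ``destroys tightness''; but convergence in law implies tightness, and multiplying a tight sequence by $\Delta_n^q\to0$ keeps it tight, so your premise would prove the opposite of \eqref{asymptotic2}. The actual mechanism in \cite{Es-Sebaiy2019} is that $\hat\theta_n$ carries a deterministic discretisation bias of order $\Delta_n$, so $e^{\theta T_n}(\hat\theta_n-\theta)$ contains a term of order $e^{\theta T_n}\Delta_n$; since $n\Delta_n^{1+\alpha}\to\infty$ forces $T_n=n\Delta_n$ to dominate any power of $\log(1/\Delta_n)$, one gets $\Delta_n^{q}e^{\theta T_n}\Delta_n\to\infty$ for every $q\ge0$, and it is this divergence that kills tightness. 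A smaller inaccuracy: with the paper's $Z_\infty=\int_0^\infty e^{-\theta s}G_s\,\dif s$ and $\zeta_\infty=\theta Z_\infty$, the a.s.\ limit of $e^{-2\theta T}\int_0^T X_s^2\,\dif s$ is $\theta Z_\infty^2/2$ rather than $Z_\infty^2/(2\theta)$; the constant matters if you want to recover the factor $2\sigma_G/\sqrt{E(Z_\infty^2)}$ appearing in \eqref{asymptotic1}.
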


The following inequality comes from Lemma 3.3 of \cite{Chenzhou2021}.
\begin{lemma}\label{chengu}
Assuming $\beta \in (0, 1) $, there is a constant $C>0$, for any $t \ge 0$, we have
\begin{align}\label{chengushizi}
\int_{0}^t e^{-\theta(t-r)} r^{\beta-1}\mathrm{d} r \le C(1 \wedge t^{\beta-1}).
\end{align}
\end{lemma}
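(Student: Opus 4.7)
The strategy is to obtain the two factors of the minimum separately by splitting according to whether $t$ is small or large. Since $\beta-1<0$, the function $t^{\beta-1}$ is unbounded near $0$ but decaying at infinity, while the integral is obviously continuous in $t$, so the nontrivial content is the decay rate as $t\to\infty$; near the origin, any uniform bound suffices.

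\textbf{Step 1: the uniform bound for $t\le 1$.} I will simply drop the exponential factor using $e^{-\theta(t-r)}\le 1$ and compute
\[
\int_0^t e^{-\theta(t-r)}r^{\beta-1}\,\mathrm{d} r \;\le\; \int_0^t r^{\beta-1}\,\mathrm{d} r \;=\; \frac{t^\beta}{\beta} \;\le\; \frac{1}{\beta},
\]
which handles the regime $t\le 1$ where $1\wedge t^{\beta-1}=1$.

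\textbf{Step 2: the decay bound for $t\ge 1$.} Here I would substitute $u=t-r$ to pull the exponential to the front, yielding
\[
\int_0^t e^{-\theta(t-r)}r^{\beta-1}\,\mathrm{d} r \;=\; \int_0^t e^{-\theta u}(t-u)^{\beta-1}\,\mathrm{d} u,
\]
and then split the $u$-integral at the midpoint $t/2$. On $[0,t/2]$ we have $t-u\ge t/2$, and since $\beta-1<0$ we get $(t-u)^{\beta-1}\le(t/2)^{\beta-1}=2^{1-\beta}t^{\beta-1}$; the exponential then contributes $\int_0^{t/2}e^{-\theta u}\,\mathrm{d} u\le 1/\theta$, giving a clean bound of order $t^{\beta-1}$. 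On $[t/2,t]$ the exponential is at most $e^{-\theta t/2}$, and the remaining polynomial integral is $(t/2)^\beta/\beta$; this piece is $O(t^\beta e^{-\theta t/2})$, which decays faster than any power of $t$ and, for $t\ge 1$, is dominated by $C\,t^{\beta-1}$ after absorbing constants.

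\textbf{Assembling the two regimes.} Choosing $C$ to be the maximum of $1/\beta$ (from Step 1) and the combined constant produced in Step 2, we obtain in both regimes a bound of the form $C(1\wedge t^{\beta-1})$.

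There is no real obstacle here: the proof is a textbook splitting argument, and the only point requiring a moment of care is that the sign $\beta-1<0$ flips inequalities (so one bounds $(t-u)^{\beta-1}$ from above by its value at the \emph{larger} endpoint). I would simply carry out the two steps above and collect constants.
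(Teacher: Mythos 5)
Your proof is correct. The paper does not actually prove this lemma --- it imports it verbatim from Lemma 3.3 of the cited reference \cite{Chenzhou2021} --- and your two-regime splitting (drop the exponential for $t\le 1$; for $t\ge 1$ substitute $u=t-r$, cut at $t/2$, and use $(t-u)^{\beta-1}\le (t/2)^{\beta-1}$ on one piece and $e^{-\theta u}\le e^{-\theta t/2}$ on the other) is exactly the standard argument behind that result, with all the inequality directions handled correctly. The only point worth making explicit is that both the bound $e^{-\theta(t-r)}\le 1$ on $[0,t]$ and the decay in Step 2 use $\theta>0$, which is the paper's standing assumption in the non-ergodic setting.
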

For the rest of this article, $C$ will be a normal number that does not depend on $T$, and its value may vary from line to line.

\section{Proof of Theorem \ref{strongthm} and Theorem \ref{strongthm2}}
First, define some functions that will be used in the proof. Let
\begin{align*}
& h(\cdot) = \mathbf{1}_{[s,T)} (\cdot), \quad f(\cdot)=e^{-\theta(T-\cdot)} \mathbf{1}_{[0,T)}(\cdot), \\
& g(\cdot)= \mathbf{1}_{[0,s)} (\cdot), \quad m(\cdot)=e^{-\theta(\cdot)} \mathbf{1}_{[t_{i-1},t_i)}(\cdot).
\end{align*}

The following Lemmas \ref{strong1}, \ref{assum1} and \ref{assum2} show that the assumptions in \cite{El2016} are valid if \ref{hypthe1} is valid. Lemmas \ref{strong1} and \ref{assum3} indicates that if \ref{hypthe1} holds, the assumptions in \cite{Es-Sebaiy2019} holds.

\begin{lemma}\label{strong1}
Under Hypothesis \ref{hypthe1}, there exists a constant $C>0$ that does not depend on $T$, for all $s, t \ge 0$, there is
\begin{align}\label{strong2}
E [|G_t-G_s|^2] \le C|t-s|^{2H}.
\end{align}Specially, when $s=0$, $E(G_t^2) \le Ct^{2H}$ .
\end{lemma}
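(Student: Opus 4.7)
The plan is to express $E[|G_t-G_s|^2]$ as an inner product in $\mathfrak{H}$ and then compare with the fractional Brownian motion case using inequality \eqref{neiji4}. Without loss of generality assume $0 \le s < t$ and choose any $T \ge t$ so that $\mathbf{1}_{[s,t)} \in \mathcal{V}_{[0,T]}$. By the definition of the inner product on $\mathfrak{H}$,
\begin{equation*}
E[|G_t - G_s|^2] = \langle \mathbf{1}_{[s,t)}, \mathbf{1}_{[s,t)} \rangle_{\mathfrak{H}}.
\end{equation*}

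Next I would apply the key inequality \eqref{neiji4} of Proposition \ref{main prelim} with $f = g = \mathbf{1}_{[s,t)}$ to obtain
\begin{equation*}
\bigl| \langle \mathbf{1}_{[s,t)}, \mathbf{1}_{[s,t)} \rangle_{\mathfrak{H}} - \langle \mathbf{1}_{[s,t)}, \mathbf{1}_{[s,t)} \rangle_{\mathfrak{H}_1} \bigr| \le \langle \mathbf{1}_{[s,t)}, \mathbf{1}_{[s,t)} \rangle_{\mathfrak{H}_2}.
\end{equation*}
The fBm piece is explicit: $\langle \mathbf{1}_{[s,t)}, \mathbf{1}_{[s,t)} \rangle_{\mathfrak{H}_1} = E[|B_t^H - B_s^H|^2] = (t-s)^{2H}$. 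For the $\mathfrak{H}_2$ piece, using \eqref{neiji12},
\begin{equation*}
\langle \mathbf{1}_{[s,t)}, \mathbf{1}_{[s,t)} \rangle_{\mathfrak{H}_2} = C'_H \int_s^t \int_s^t (uv)^{H-1}\,\mathrm{d} u\,\mathrm{d} v = \frac{C'_H}{H^2}\bigl(t^H - s^H\bigr)^2.
\end{equation*}

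The one nontrivial step is controlling $(t^H - s^H)^2$ by $|t-s|^{2H}$. Since $H \in (0,\tfrac12)\cup(\tfrac12,1) \subset (0,1]$, the function $x \mapsto x^H$ is $H$-H\"older on $[0,\infty)$, so $t^H - s^H \le (t-s)^H$, and therefore
\begin{equation*}
\langle \mathbf{1}_{[s,t)}, \mathbf{1}_{[s,t)} \rangle_{\mathfrak{H}_2} \le \frac{C'_H}{H^2} (t-s)^{2H}.
\end{equation*}
Combining the two estimates yields $E[|G_t - G_s|^2] \le \bigl(1 + \tfrac{C'_H}{H^2}\bigr)(t-s)^{2H}$, which is the required bound with a constant independent of $T$. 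The special case $s=0$ follows immediately since $\mathbf{1}_{[0,t)}$ is already handled by the same calculation (the integral $\int_0^t u^{H-1}\mathrm{d} u = t^H/H$ remains finite).

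I expect no serious obstacle: the entire argument reduces to the already-established comparison inequality \eqref{neiji4} plus an elementary H\"older bound. The only mild subtlety is checking that the computation of $\langle \mathbf{1}_{[s,t)}, \mathbf{1}_{[s,t)}\rangle_{\mathfrak{H}_2}$ factors as a product of two one-dimensional integrals (which it does because the integrand separates), and remembering that the inequality $t^H - s^H \le (t-s)^H$ requires $H \le 1$, safely satisfied under Hypothesis \ref{hypthe1}.
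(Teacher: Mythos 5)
Your proposal is correct and follows essentially the same route as the paper: write $E[|G_t-G_s|^2]=\langle \mathbf{1}_{[s,t)},\mathbf{1}_{[s,t)}\rangle_{\mathfrak{H}}$, invoke the comparison inequality \eqref{neiji4}, identify the $\mathfrak{H}_1$ term as $(t-s)^{2H}$, and bound the $\mathfrak{H}_2$ term by $C(t^H-s^H)^2\le C(t-s)^{2H}$ via the subadditivity of $x\mapsto x^H$. No substantive differences.
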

\begin{proof}
Suppose $t \ge s$. According to It\^{o} isometric and inequality \eqref{neiji4}, there is
$$E [|G_t-G_s|^2] =\langle h,h \rangle_{\mathfrak{H}} \le \langle h,h \rangle_{\mathfrak{H}_1}+\langle h,h \rangle_{\mathfrak{H}_2} .$$
where $\langle h,h \rangle_{\mathfrak{H}_1}$ is shown in equation \eqref{neiji11}. Because $R_H (t, s) $ is covariance function of fBm, so $\langle h,h \rangle_{\mathfrak{H}_1}$ value is $|t-s|^{2H}$. The formula \eqref{neiji12} can be obtained
$$\langle h,h \rangle_{\mathfrak{H}_2} = C'_H \left(\int_{s}^t u^{H-1} \dif u\right)^2 =C(t^H - s^H)^2 \le  C |t-s|^{2H},$$
We're using an inequality $1-x^H \le (1-x)^H$ here, where $x \in [0,1]$ and $H \in (0,1)$. Therefore, we have
$$E [|G_t-G_s|^2] \le C|t-s|^{2H},$$and when $s=0$, we have $E(G_t^2) \le Ct^{2H}$ .
\end{proof}

\begin{lemma}\label{assum1}
The constant $\sigma_G =\sqrt{\frac{H\Gamma(2H)}{\theta^{2H}}}$. Under Hypothesis \ref{hypthe1}, we have that
\begin{align}\label{assum11}
\lim_{T \to \infty} E \left[ \left(e^{-\theta T} \int_{0}^T e^{\theta s} \dif G_s \right)^2 \right] = \sigma_G^2.
\end{align}
\end{lemma}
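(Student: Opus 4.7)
The plan is to express the quantity of interest as a single inner product in $\mathfrak{H}$ and then exploit the comparison with fBm built into Proposition \ref{main prelim}. Using the linearity of the Wiener integral, $e^{-\theta T}\int_0^T e^{\theta s}\,\dif G_s = G(f)$, with $f(\cdot) = e^{-\theta(T-\cdot)}\mathbf{1}_{[0,T)}(\cdot)$ as defined at the top of this section, so
\begin{align*}
E\left[\left(e^{-\theta T}\int_0^T e^{\theta s}\,\dif G_s\right)^2\right] = \langle f, f\rangle_{\mathfrak{H}}.
\end{align*}
By the key inequality \eqref{neiji4}, $\langle f,f\rangle_{\mathfrak{H}}$ differs from the fBm inner product $\langle f,f\rangle_{\mathfrak{H}_1}$ by at most $\langle f,f\rangle_{\mathfrak{H}_2}$, so the proof cleanly splits into evaluating the fBm limit and controlling the error term.

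For the fBm limit, $\langle f,f\rangle_{\mathfrak{H}_1}$ equals the second moment one would compute if $G$ were replaced by fBm of the same Hurst index. This is a classical object in the non-ergodic fBm-OU literature: after the change of variables $u = T-s$, $v = T-t$ (and, for $H > 1/2$, passage to the representation $\langle f,f\rangle_{\mathfrak{H}_1} = H(2H-1)\int_0^T\!\!\int_0^T e^{-\theta(u+v)}|u-v|^{2H-2}\,du\,dv$), a further substitution $w = u-v$ together with Fubini and the identity $\Gamma(2H) = (2H-1)\Gamma(2H-1)$ yields $\lim_{T\to\infty}\langle f,f\rangle_{\mathfrak{H}_1} = \frac{H\Gamma(2H)}{\theta^{2H}} = \sigma_G^2$. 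The same limiting value holds for $H \in (0,1/2)$ by an analogous argument based on $\partial R_H/\partial t$, and is already recorded in \cite{Belfadli2011,El2016}.

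For the error term, formula \eqref{neiji12} gives
\begin{align*}
\langle f,f\rangle_{\mathfrak{H}_2} = C'_H \left(\int_0^T e^{-\theta(T-u)}u^{H-1}\,\dif u\right)^2,
\end{align*}
and applying Lemma \ref{chengu} with $\beta = H \in (0,1)$ bounds the inner integral by $C(1 \wedge T^{H-1})$, so $\langle f,f\rangle_{\mathfrak{H}_2} \le C\,T^{2H-2} \to 0$. Combined with the fBm limit, this yields \eqref{assum11}.

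The main obstacle is step (a) in the regime $H < 1/2$: the kernel $\partial^2 R_H/\partial t\partial s$ is not absolutely integrable, so the naive double-integral representation of $\langle f,f\rangle_{\mathfrak{H}_1}$ is not available and one must instead work from \eqref{neiji11}, using the absolutely integrable first derivative $\partial R_H/\partial t$. This is precisely why we apply \eqref{neiji4} upfront: it isolates the fBm contribution, whose limit is already known across the full Hurst range, from the correction $\langle f,f\rangle_{\mathfrak{H}_2}$, which is uniformly controlled by Lemma \ref{chengu} and vanishes regardless of whether $H$ lies above or below $1/2$.
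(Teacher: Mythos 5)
Your proposal is correct and follows essentially the same route as the paper: write the variance as $\langle f,f\rangle_{\mathfrak{H}}$, split off the fBm part via \eqref{neiji4}, identify $\lim_{T\to\infty}\langle f,f\rangle_{\mathfrak{H}_1}=\sigma_G^2$ as a known fact (the paper cites Lemma 2.3 of \cite{chenli2021} where you sketch the direct computation), and kill the remainder with the bound $\langle f,f\rangle_{\mathfrak{H}_2}\le CT^{2H-2}$ from \eqref{neiji12} and Lemma \ref{chengu}. No substantive difference.
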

\begin{proof}
According to It\^{o} isometric, there is
$$E \left[ \left(e^{-\theta T} \int_{0}^T e^{\theta s} \dif G_s \right)^2 \right] = \langle f,f \rangle_{\mathfrak{H}} .$$

When $H \in (0,\frac 12)\cup(\frac 12,1)$, Lemma 2.3 of \cite{chenli2021} gives the limit when $T \to \infty$:
$$\langle f,f \rangle_{\mathfrak{H}_1} \longrightarrow  \sigma_G^2.$$
In addition, by the formula \eqref{neiji12} and Lemma \ref{chengu}, we have
\begin{align*}
\langle f,f \rangle_{\mathfrak{H}_2}&= C'_H \int_{[0,T]^2} e^{-\theta(T-u)}e^{-\theta(T-v)} (uv)^{H-1} \mathrm{d} u \mathrm{d} v \\
&= C'_H \left( e^{-\theta T} \int_{0}^{T} e^{\theta u}u^{H-1}\dif u \right)^2  \le CT^{2H-2}.
\end{align*}
Therefore, according to the inequality \eqref{neiji4} and the triangle inequality, when $T \to \infty$, we have
\begin{align*}
|\langle f,f \rangle_{\mathfrak{H}}-\sigma_G^2| & \le |\langle f,f \rangle_{\mathfrak{H}}- \langle f,f \rangle_{\mathfrak{H}_1}|+|\langle f,f \rangle_{\mathfrak{H}_1}- \sigma_G^2 |  \\
& \le \langle f,f \rangle_{\mathfrak{H}_2} + |\langle f,f \rangle_{\mathfrak{H}_1}- \sigma_G^2 | \\
 & \longrightarrow  0 .
\end{align*}
\end{proof}

\begin{lemma}\label{assum2}
Under Hypothesis \ref{hypthe1}. For any fixed $ s\in [0,T)$ , we have
\begin{align}\label{assum21}
\lim_{T \to \infty} E \left( G_s e^{-\theta T} \int_{0}^T e^{\theta r }\dif G_r \right) =0.
\end{align}
\end{lemma}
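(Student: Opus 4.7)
The plan is to recast the expectation as an inner product in $\mathfrak{H}$ and then split it using the key inequality \eqref{neiji4}. With the auxiliary functions fixed at the start of Section~3, namely $g(\cdot)=\mathbf{1}_{[0,s)}(\cdot)$ and $f(\cdot)=e^{-\theta(T-\cdot)}\mathbf{1}_{[0,T)}(\cdot)$, the It\^{o} isometry yields
$$
E\!\left( G_s e^{-\theta T}\int_{0}^{T} e^{\theta r}\,\dif G_r\right)=\langle g,f\rangle_{\mathfrak{H}},
$$
so the task reduces to showing $\langle g,f\rangle_{\mathfrak{H}}\to 0$ as $T\to\infty$. Applying the triangle inequality together with \eqref{neiji4} gives
$$
|\langle g,f\rangle_{\mathfrak{H}}|\le |\langle g,f\rangle_{\mathfrak{H}_1}|+\langle g,f\rangle_{\mathfrak{H}_2},
$$
and I will control the two pieces separately.

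For the $\mathfrak{H}_2$ part, the bound \eqref{neiji12} factors as a product of one-dimensional integrals since $|g(u)||f(v)|$ separates:
$$
\langle g,f\rangle_{\mathfrak{H}_2}=C'_H\!\left(\int_{0}^{s} u^{H-1}\,\dif u\right)\!\left(e^{-\theta T}\int_{0}^{T}e^{\theta v}v^{H-1}\,\dif v\right)=\frac{C'_H s^{H}}{H}\cdot e^{-\theta T}\int_{0}^{T}e^{\theta v}v^{H-1}\,\dif v.
$$
The first factor is finite because $s\in[0,T)$ is fixed and $H>0$, while the second factor is exactly the object estimated in Lemma~\ref{chengu} (after recognizing it as $\int_{0}^{T}e^{-\theta(T-v)}v^{H-1}\,\dif v$); it is $O(1\wedge T^{H-1})$ and thus tends to $0$ since $H<1$.

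For the $\mathfrak{H}_1$ piece, the same inner product computed with the covariance of fBm is precisely the object $E\!\left( B_s^{H} e^{-\theta T}\int_{0}^{T} e^{\theta r}\,\dif B_r^{H}\right)$. This is the standard fBm analogue of \eqref{assum21}, already verified in the literature on non-ergodic fBm-driven O-U processes (for $H\in(\tfrac12,1)$ it is contained in \cite{El2016}, and for $H\in(0,\tfrac12)$ in the companion work \cite{chenli2021}; both cases are also a direct consequence of the representation in the remark following Proposition~\ref{main prelim} combined with Lemma~\ref{chengu}), so it contributes $o(1)$. Summing the two $o(1)$ bounds yields \eqref{assum21}.

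I expect the only delicate point to be the fBm term $\langle g,f\rangle_{\mathfrak{H}_1}$, because for $H\in(0,\tfrac12)$ the kernel $\partial^{2}R_{H}/\partial u\,\partial v$ is not absolutely integrable and one cannot simply dominate the integrand pointwise. All other steps are an application of \eqref{neiji4}, the product structure of \eqref{neiji12}, and the exponential-versus-power decay estimate \eqref{chengushizi}; the value of the limit being exactly zero (not merely small) comes from the fact that both resulting factors genuinely vanish at the rate $T^{H-1}$ as $T\to\infty$.
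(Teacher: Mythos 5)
Your proposal is correct in outline but takes a genuinely different route from the paper on the one hard term. Both proofs start identically: It\^{o} isometry turns the expectation into $\langle f,g\rangle_{\mathfrak{H}}$ and \eqref{neiji4} splits off an $\mathfrak{H}_2$ remainder. Your treatment of that remainder is clean and in fact slightly sharper than the paper's: the kernel $(uv)^{H-1}$ is absolutely integrable, so the product factorization plus Lemma~\ref{chengu} gives a global $O(T^{H-1})$ bound with no further decomposition. The divergence is in the fBm term $\langle f,g\rangle_{\mathfrak{H}_1}$. You outsource it to the literature (\cite{El2016} does verify its hypothesis for fBm of arbitrary Hurst index, so the citation carries the claim), whereas the paper proves it internally by splitting $f=f_1+f_2$ at the point $s$: on the overlapping piece $f_1$ it bounds both $\langle f_1,g\rangle_{\mathfrak{H}_1}$ and $\langle f_1,g\rangle_{\mathfrak{H}_2}$ by $C_{\theta,H,s}e^{-\theta T}$ using the boundary-term representation of Corollary~\ref{tuilunneiji}, and on the disjointly supported piece $f_2$ it invokes Lemma~\ref{aassum3} (where the pointwise kernel bound is legitimate for all $H$ precisely because the supports are separated) followed by L'H\^{o}pital. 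The paper's splitting is exactly the device that circumvents the non-integrability of $\partial^2 R_H/\partial u\,\partial s$ near the diagonal for $H<\tfrac12$ without appealing to any external result; your version buys brevity at the cost of self-containedness. One small correction: your parenthetical claim that the fBm limit for $H\in(0,\tfrac12)$ is ``a direct consequence of the representation in the remark following Proposition~\ref{main prelim}'' is not tenable --- that remark states that \eqref{neiji5} fails for $H<\tfrac12$ --- but since you correctly flag this as the delicate point and the citation to \cite{El2016} covers it, the argument stands.
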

\begin{proof}
For fixed $s\in [0,T)$, divide the function $f(\cdot)$ into the following form
\begin{align*}
f(\cdot) = e^{-\theta(T-\cdot)}\mathbf{1}_{[0,s)}(\cdot)+e^{-\theta (T-\cdot)}\mathbf{1}_{[s,T)}(\cdot) :=f_{1}(\cdot)+f_{2}(\cdot).
\end{align*}
According to It\^{o} isometric, there is
\begin{align}\label{assum32}
E \left( G_s e^{-\theta T} \int_{0}^T e^{\theta r }\dif G_r \right) = \langle f,g \rangle_{\mathfrak{H}} =\langle f_1,g\rangle_{\mathfrak{H}}+\langle f_2,g\rangle_{\mathfrak{H}}.
\end{align}
First calculate the limit of $\langle f_1,g\rangle_{\mathfrak{H}}$. According to formulas \eqref{neiji} and \eqref{neiji12}, monotonicity indicate the presence of normal number $C_{\theta,H,s}$ and $C'_{\theta,H,s}$, have
\begin{align*}
0<\langle f_1,g \rangle_{\mathfrak{H}_1} &= e^{-\theta T} \int_{R^2}e^{\theta v} \mathbf{1}_{[0,s)}(v) (\delta_{s}(u)-\delta_{0}(u)) \frac {\partial R_H (u,v)}{\partial v} \mathrm{d} u \mathrm{d} v \\
& =H e^{-\theta T} \int_{0}^{s}e^{\theta v}(v^{2H-1} +(s-v)^{2H-1}) \mathrm{d} v \\
& \le H e^{-\theta T} e^{\theta s}\int_{0}^{s} (v^{2H-1} +(s-v)^{2H-1}) \mathrm{d} v \\
& =C_{\theta,H,s}  e^{-\theta T},
\end{align*}and
\begin{align*}
0<\langle f_1,g \rangle_{\mathfrak{H}_2} &= C'_H e^{-\theta T} \int_{0}^s \int_{0}^s e^{\theta v} (uv)^{H-1}\mathrm{d} u \mathrm{d} v \\
&\le C'_H e^{-\theta T} e^{\theta s} \left(\int_{0}^s u^{H-1}\mathrm{d} u \right)^2 \\
&= C'_{\theta,H,s} e^{-\theta T}.
\end{align*}Therefore, when $T \to \infty$, inner products $\langle f_1,g \rangle_{\mathfrak{H}_1}$ and $\langle f_1,g \rangle_{\mathfrak{H}_2}$ converge to 0. By substituting it into \eqref{neiji4}, it can be obtained by forced convergence
\begin{align*}
\langle f_1,g \rangle_{\mathfrak{H}}  \longrightarrow 0 \quad (T \to \infty).
\end{align*}

Next, calculate the limit of $\langle f_2,g\rangle_{\mathfrak{H}}$. Because of the function $f_2$ and $g$ were support on $[s,T)$ and $[0,s)$, and $H \in (0,\frac 12)\cup(\frac 12 ,1)$, so according to Lemma \ref{aassum3}, we have
$$|\langle f_2,g \rangle_{\mathfrak{H}}| \le  \int_{s}^{T} \int_{0}^s e^{-\theta(T-v)}\left[ C_H(v-u)^{2H-2}+C'_H(uv)^{H-1} \right] \mathrm{d} u \mathrm{d} v. $$
Using the L'H\^{o}spital rule
\begin{align*}
\lim\limits_{T \to \infty} |\langle f_2,g \rangle_{\mathfrak{H}}|&\le \lim\limits_{T \to \infty}  e^{-\theta T} \int_{s}^{T} \int_{0}^s e^{\theta v}\left[ C_H(v-u)^{2H-2}+C'_H(uv)^{H-1} \right] \mathrm{d} u \mathrm{d} v \\
& = \lim\limits_{T \to \infty}\theta^{-1}  \int_{0}^s \left[ C_H(T-u)^{2H-2}+C'_H (Tu)^{H-1} \right] \mathrm{d} u\\
&= 0.
\end{align*}

Finally, combining the limits of $\langle f_1,g\rangle_{\mathfrak{H}}$ and $\langle f_2,g\rangle_{\mathfrak{H}}$ with formula \eqref{assum32}, the desired result \eqref{assum21} is obtained.
\end{proof}

\noindent{\it proof of Theorem \ref{strongthm}\,}
According to the Kolmogorov continuity criterion and the Lemma \ref{strong1}, we know that for all $\varepsilon \in (0,H)$, the process $G$ has $H-\varepsilon $ order H\"{o}lder continuous path, and the strong consistency of the estimator $\tilde{\theta}_T$ is deduced from the Theorem \ref{ELtheorem}. In addition, combining the results of Lemma \ref{strong1}, Lemma \ref{assum1} and Lemma \ref{assum2}, the Theorem \ref{ELtheorem} can derive its asymptotic distribution.

\begin{lemma}\label{lemmaassum3}
Assuming $\gamma \in (0,1)$ , $t_i = i \Delta_n$ , and $\Delta_n \to 0$ . Then, for any $i=1,...,n$ , there is
\begin{align}\label{lemmaassum31}
e^{t_i} \int_{t_{i-1}}^{t_i} e^{-\theta u} (t_i -u)^{2\gamma -1} \dif u \le C(\gamma) \Delta_n^{2\gamma} e^{-2\theta t_{i-1}},
\end{align}
\begin{align}\label{lemmaassum32}
e^{t_{i-1}} \int_{t_{i-1}}^{t_i} e^{-\theta u} (u-t_{i-1})^{2\gamma -1} \dif u \le C(\gamma) \Delta_n^{2\gamma} e^{-2\theta t_{i-1}}.
\end{align}
\end{lemma}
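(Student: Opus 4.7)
The plan is to derive both inequalities by the same elementary mechanism: a change of variables that reduces each integral to one over $[0,\Delta_n]$, followed by the identity $\int_0^{\Delta_n} v^{2\gamma-1}\,\dif v = \Delta_n^{2\gamma}/(2\gamma)$ and the fact that on the short interval $[0,\Delta_n]$ any factor of the form $e^{\pm\theta v}$ is uniformly bounded as $\Delta_n\to 0$.

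For \eqref{lemmaassum31}, I would substitute $v:=t_i-u$, so that $\dif u=-\dif v$ and the interval $[t_{i-1},t_i]$ is mapped onto $[0,\Delta_n]$. This rewrites the integral on the left-hand side as $e^{-\theta t_i}\int_0^{\Delta_n} e^{\theta v}\,v^{2\gamma-1}\,\dif v$, up to the exponential prefactor standing in front. Using $e^{\theta v}\le e^{\theta\Delta_n}$ for $v\in[0,\Delta_n]$ and evaluating the remaining power integral explicitly yields an upper bound of the form $C(\gamma)\Delta_n^{2\gamma}e^{-2\theta t_i}$; the stated right-hand side then follows from $e^{-2\theta t_i}\le e^{-2\theta t_{i-1}}$, which holds since $\theta>0$ and $t_i>t_{i-1}$.

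For \eqref{lemmaassum32}, the symmetric substitution $v:=u-t_{i-1}$ produces $e^{-\theta t_{i-1}}\int_0^{\Delta_n} e^{-\theta v}\,v^{2\gamma-1}\,\dif v$, and the trivial bound $e^{-\theta v}\le 1$ together with the same power-integral estimate gives the required conclusion immediately after combining with the prefactor.

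There is no serious obstacle here; the lemma is a routine calculation. The only care needed is the bookkeeping of exponential factors, so that the prefactor in front of the integral, the $e^{-\theta u}$ inside, and the constant $e^{\theta\Delta_n}$ coming from $\sup_{v\in[0,\Delta_n]}e^{\theta v}$ combine to produce exactly the form $C(\gamma)\Delta_n^{2\gamma}e^{-2\theta t_{i-1}}$. Since $\Delta_n\to 0$, the factor $e^{\theta\Delta_n}$ is bounded uniformly in $n$ and may safely be absorbed into $C(\gamma)$.
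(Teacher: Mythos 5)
Your argument is correct and complete, and it is in fact more than the paper provides: the paper's ``proof'' of this lemma is a single sentence deferring to Lemma~4.2 of \cite{Es-Sebaiy2019}, so your elementary computation (substitute $v=t_i-u$, resp.\ $v=u-t_{i-1}$, evaluate $\int_0^{\Delta_n}v^{2\gamma-1}\,\dif v=\Delta_n^{2\gamma}/(2\gamma)$, which is finite since $2\gamma-1>-1$, and absorb the uniformly bounded factor $e^{\theta\Delta_n}$ into $C(\gamma)$) is a genuine self-contained replacement. One point you should make explicit rather than leave implicit: as literally printed, the prefactors $e^{t_i}$ and $e^{t_{i-1}}$ in \eqref{lemmaassum31}--\eqref{lemmaassum32} cannot be right --- with $e^{t_i}$ the left-hand side of \eqref{lemmaassum31} behaves like $e^{(1-\theta)t_i}\Delta_n^{2\gamma}$, which is not dominated by $C(\gamma)\Delta_n^{2\gamma}e^{-2\theta t_{i-1}}$ as $i\to\infty$. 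The intended prefactors are $e^{-\theta t_i}$ and $e^{-\theta t_{i-1}}$, as one sees from the display in the proof of Lemma~\ref{assum3} where the lemma is invoked. Your substitution argument silently supplies exactly these factors (you produce $e^{-\theta t_i}\cdot e^{-\theta t_i}e^{\theta\Delta_n}\Delta_n^{2\gamma}/(2\gamma)$ and $e^{-\theta t_{i-1}}\cdot e^{-\theta t_{i-1}}\Delta_n^{2\gamma}/(2\gamma)$), so you are proving the corrected statement; say so, since otherwise a reader checking your bound against the printed lemma will be confused.
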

\begin{proof}
The proof process is similar to Lemma 4.2 of \cite{Es-Sebaiy2019}, which will not be proved in detail here.
\end{proof}

\begin{lemma}\label{assum3}
Under Hypothesis \ref{hypthe1}, for each $i=1,\cdots,n$ and $n \ge 1$ , we have that
\begin{align}\label{assum31}
E\left[ (\zeta_{t_i}-\zeta_{t_{i-1}})^2 \right]\le C(H)\Delta_n^{2H}  e^{-2\theta t_i}.
\end{align}
\end{lemma}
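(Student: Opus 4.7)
The plan is to combine Itô isometry with the key splitting inequality \eqref{neiji4} and to bound each of the two resulting pieces separately.

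First, I would represent the increment as $\zeta_{t_i}-\zeta_{t_{i-1}} = G(m)$, where $m(\cdot)=e^{-\theta \cdot}\mathbf{1}_{[t_{i-1},t_i)}(\cdot)$ is the auxiliary function already introduced at the start of the section. The isonormal interpretation on the Hilbert space $\mathfrak{H}$ gives $E[(\zeta_{t_i}-\zeta_{t_{i-1}})^2]=\langle m,m\rangle_{\mathfrak{H}}$, so applying \eqref{neiji4} immediately reduces the task to separately controlling $\langle m,m\rangle_{\mathfrak{H}_1}$ and $\langle m,m\rangle_{\mathfrak{H}_2}$, each by a constant times $\Delta_n^{2H}e^{-2\theta t_i}$.

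For $\langle m,m\rangle_{\mathfrak{H}_1}$, observe that this is precisely the variance of $\int_{t_{i-1}}^{t_i}e^{-\theta s}\,\mathrm{d}B_s^H$, the analogous Young integral driven by fBm. When $H\in(1/2,1)$ one writes this double integral as $H(2H-1)\int_{t_{i-1}}^{t_i}\!\int_{t_{i-1}}^{t_i}e^{-\theta(u+v)}|u-v|^{2H-2}\,\mathrm{d}u\,\mathrm{d}v$; factoring out $e^{-2\theta t_{i-1}}$ and substituting $u=t_{i-1}+x$, $v=t_{i-1}+y$ turns the remaining integral into $\int_0^{\Delta_n}\!\int_0^{\Delta_n}|x-y|^{2H-2}\mathrm{d}x\mathrm{d}y=\Delta_n^{2H}/[H(2H-1)]$, giving the desired bound. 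For $H\in(0,1/2)$ the same bound holds but one cannot use absolute integrability; I would invoke Lemma \ref{lemmaassum3} (the inequalities \eqref{lemmaassum31}--\eqref{lemmaassum32} with $\gamma=H$), which is exactly the pure-fBm estimate established in Lemma 4.2 of \cite{Es-Sebaiy2019}.

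For $\langle m,m\rangle_{\mathfrak{H}_2}$, the definition \eqref{neiji12} turns this into a product of single integrals,
\begin{equation*}
\langle m,m\rangle_{\mathfrak{H}_2}=C'_H\left(\int_{t_{i-1}}^{t_i}e^{-\theta u}u^{H-1}\,\mathrm{d}u\right)^{\!2}.
\end{equation*}
Bounding $e^{-\theta u}\le e^{-\theta t_{i-1}}$ on $[t_{i-1},t_i]$ extracts a factor $e^{-2\theta t_{i-1}}$, and it remains to show $\int_{t_{i-1}}^{t_i}u^{H-1}\,\mathrm{d}u\le C\,\Delta_n^{H}$. For $i=1$ this is direct since $\int_0^{\Delta_n}u^{H-1}\mathrm{d}u=\Delta_n^H/H$; for $i\ge 2$ one uses $u^{H-1}\le t_{i-1}^{H-1}\le \Delta_n^{H-1}$ (since $t_{i-1}\ge\Delta_n$ and $H-1<0$) to obtain $\int_{t_{i-1}}^{t_i}u^{H-1}\mathrm{d}u\le \Delta_n^H$. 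Squaring yields $C\Delta_n^{2H}$.

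Finally I would combine both bounds and convert $e^{-2\theta t_{i-1}}=e^{2\theta\Delta_n}e^{-2\theta t_i}$; since $\Delta_n\to 0$ the prefactor $e^{2\theta\Delta_n}$ is uniformly bounded and can be absorbed in $C(H)$, yielding \eqref{assum31}. The main technical difficulty is the low-Hurst regime $H<1/2$ in the fBm part, where the mixed partial derivative of $R_H$ is not absolutely integrable; here one must work with the Volterra representation or quote the sharp estimate from \cite{Es-Sebaiy2019}, which is precisely why Lemma \ref{lemmaassum3} was placed immediately before this statement.
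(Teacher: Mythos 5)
Your proposal is correct and follows essentially the same route as the paper: Itô isometry plus the splitting inequality \eqref{neiji4}, with the $\mathfrak{H}_1$ (pure fBm) part controlled via Lemma \ref{lemmaassum3} (the paper makes the reduction to those two integrals explicit through Corollary \ref{tuilunneiji}, whereas you quote the known fBm bound and add a direct computation for $H>\tfrac12$), and the $\mathfrak{H}_2$ part bounded by the same product of single integrals. Your explicit handling of the conversion $e^{-2\theta t_{i-1}}=e^{2\theta\Delta_n}e^{-2\theta t_i}$ is a small point the paper leaves implicit.
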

\begin{proof}
According to It\^{o} isometric and inequality \eqref{neiji4}, there is
\begin{align}\label{assum33}
E \left[ (\zeta_{t_i}-\zeta_{t_{i-1}})^2 \right]=\langle m,m \rangle_{\mathfrak{H}}\le \langle m,m \rangle_{\mathfrak{H}_1}+\langle m,m \rangle_{\mathfrak{H}_2}.
\end{align}Again, first calculate the value of ~$\langle m,m \rangle_{\mathfrak{H}_1}$. When the functions $f,g $ in the Corollary \ref{tuilunneiji} are equal, there is
\begin{align*}
\langle m,m \rangle_{\mathfrak{H}_1}
&=  \int_{[t_{i-1},t_i]^2} e^{-\theta (u+v)} \frac{\partial R_H(u,v)}{\partial u}\big[\theta+( \delta_{t_i}(v) -\delta_{t_{i-1}}(v)) \big] \mathrm{d} u \mathrm{d} v \\
& = \theta H \int_{t_{i-1}}^{t_i}\int_{t_{i-1}}^{t_i} e^{-\theta (u+v)} \big( u^{2H-1} -|u-v|^{2H-1}\mathrm{sgn}(u-v) \big) \mathrm{d} u \mathrm{d} v \\
&+ H  e^{-\theta t_i} \int_{t_{i-1}}^{t_i}  e^{-\theta u} \big( u^{2H-1}+(t_i -u)^{2H-1} \big) \mathrm{d} u \\
&- H e^{-\theta t_{i-1}} \int_{t_{i-1}}^{t_i}  e^{-\theta u} \big( u^{2H-1}-(u-t_{i-1})^{2H-1} \big) \mathrm{d} u \\
& = H  e^{-\theta t_i} \int_{t_{i-1}}^{t_i}  e^{-\theta u} (t_i -u)^{2H-1}  \mathrm{d} u + H e^{-\theta t_{i-1}} \int_{t_{i-1}}^{t_i}  e^{-\theta u} (u-t_{i-1})^{2H-1}  \mathrm{d} u \\
&\le  e^{-2\theta t_{i-1}} \Delta_n^{2H}.
\end{align*}The Lemma \ref{lemmaassum3} holds when $H=\gamma$. Next, calculate the value of $\langle m,m \rangle_{\mathfrak{H}_2}$. By
the formula \eqref{neiji12}, we have
\begin{align*}
\langle m,m \rangle_{\mathfrak{H}_2}&= C'_H \int_{t_{i-1}}^{t_i}\int_{t_{i-1}}^{t_i}  e^{-\theta (u+v)} (uv)^{H-1} \mathrm{d} u \mathrm{d} v \\
&= C'_H \left(\int_{t_{i-1}}^{t_i}   e^{-\theta u} u^{H-1}  \mathrm{d} u \right)^2 \\
& \le \frac{C'_H}{H^2}   e^{-2\theta t_{i-1}} \Delta_n^{2H}.
\end{align*} The desired result is obtained by substituting the above two inequalities into \eqref{assum33}.
\end{proof}

\noindent{\it proof of the Theorem \ref{strongthm2}\,}
Combining the results of the Lemma \ref{strong1} and the Lemma \ref{assum3}, the strong consistency of $\hat{\theta}_n$ and $\check{\theta}_n$ is deduced by \ref{ELtheorem}, which is based on $X$ discrete time observation. And random sequence $\sqrt{T_n}(\hat{\theta}_n - \theta)$ and $\sqrt{T_n}(\check{\theta}_n - \theta)$ is tight.


\begin{thebibliography}{10}
\bibitem{El2016} El Machkouri M, Es-Sebaiy K, Ouknine Y. Least squares estimator for non-ergodic Ornstein-Uhlenbeck processes driven by Gaussian processes[J]. Journal of the Korean Statistical Society, 2016, 45(3): 329-341.
\bibitem{Es-Sebaiy2019} Es-sebaiy K, Alazemi F,  Al-foraih M. Least Squares Type Estimation for Discretely Observed Non-Ergodic Gaussian Ornstein-Uhlenbeck Processes[J]. Acta Math Sci, 2019, 39(4): 989-1002.
\bibitem{hu2010} Hu Y, Nualart D. Parameter estimation for fractional Ornstein-Uhlenbeck processes[J]. Stat Probab Lett, 2010, 80(11-12): 1030-1038.
\bibitem{hunua2019} Hu Y, Nualart D, Zhou H. Parameter estimation for fractional Ornstein-Uhlenbeck processes of general Hurst parameter[J]. Stat Inference Stoch Process, 2019, 22(1): 111-142.
\bibitem{Kleptsyna2002} Kleptsyna M L, Lebreton A. Statistical analysis of the fractional Ornstein-Uhlenbeck type process[J]. Journal of Stat Inference Stoch Process, 2002, 5: 229-248.
\bibitem{Sottinen2018} Sottinen T, Viitasaari L. Parameter estimation for the Langevin equation with stationary-increment Gaussian noise[J]. Stat Inference Stoch Process, 2018, 21(3): 569-601.
\bibitem{Caiwang2022} Cai C, Wang Q, Xiao W. Mixed Sub-fractional Brownian Motion and Drift Estimation of Related Ornstein-Uhlenbeck Process[J]. Communications in Mathematics and Statistics, 2022: 1-27.
\bibitem{XIAO2011}Xiao W L, Zhang W G, Xu W D. PParameter estimation for fractional Ornstein-Uhlenbeck processes at discrete observation[J]. Math. Model, 2011, 35(9): 4196-4207.
\bibitem{Hu2013} Hu Y Z, Song J. Parameter estimation for fractional Ornstein-Uhlenbeck processes with discrete observations[J]. Springer Proc. Math. Stat, 2013, 34: 427-442.
\bibitem{Chenzhou2021} Chen Y, Zhou  H.  Parameter estimation for an ornstein-uhlenbeck process driven by a general gaussian noise[J]. Acta Mathematica Scientia, 2021, 41(2): 573-595.
\bibitem{chengu2021} Chen Y, Gu X, Li Y. Parameter estimation for an Ornstein-Uhlenbeck Process driven by a general Gaussian noise with Hurst Parameter $ H\in (0,\frac12) $[J]. arXiv preprint arXiv:2111.15292, 2021.
\bibitem{Belfadli2011} Belfadli R, Es-sebaiy K, Ouknine Y.Parameter estimation for fractional Ornstein-Uhlenbeck Processes: Non-ergodic case[J]. Frontiers in Science and Engineering (An International Journal Edited by Hassan II Academy of Science and Technology), 2011, 1(2): 1-16.
\bibitem{Mendy2013} Mendy I. Parametric estimation for sub-fractional Ornstein-Uhlenbeck process[J]. Journal of Statistical Planning and Inference, 2013, 143(4): 663-674.
\bibitem{Essebaiy2014} Es-sebaiy K, Ndiaye D. On drift estimation for discretely observed non-ergodic fractional Ornstein Uhlenbeck processes with discrete observations[J]. Afr Stat, 2014, 9(1):615-625.
\bibitem{Cheng2017} Cheng P, Shen G, Chen Q. Parameter estimation for nonergodic Ornstein-Uhlenbeck process driven by the weighted fractional Brownian motion[J]. Adv. Difference Equ, 2017, 2017(1): 366.
\bibitem{Alsenafi2021} Alsenafu A, Al-foraih M, Es-sebaiy K. Least squares estimation for non-ergodic weighted fractional Ornstein-Uhlenbeck process of general parameters[J]. AIMS Math, 2021,  6(11): 12780-12794.
\bibitem{Tomasz2004} Tomasz B, Luis G, Anna T. Sub-fractional Brownian motion and its relation to occupation times[J]. Statist. Probab. Lett, 2004, 69(4): 405-419.
\bibitem{Lei2009} Lei P, Nualart D. A decomposition of the bi-fractional Brownian motion and some applications[J]. Statist. Probab. Lett, 2009, 79(5): 619-624.
\bibitem{Bardina2011} Bardina X, Es-sebaiy K. An extension of bifractional Brownian motion[J]. Commun. Stoch. Anal, 2011,  5(2): 333-340.
\bibitem{Sgha2013} Sghir A. The generalized sub-fractional Brownian motion[J]. Communications on Stochastic Analysis, 2013,  7(3): 2.
\bibitem{Bojdecki2007} Bojdecki T,  Gorostiza L,  Talarczyk A. Some extensions of fractional Brownian motion and sub-fractional Brownian motion related to particle systems[J].   Electronic communications in probability,  2007,  12: 161-172.
\bibitem{Talarczyk2020} Talarczyk A. Bifractional Brownian motion for $H<1$ and $2HK \le 1$[J].   Statistics and Probability Letters,  2020,  157: 108628.
\bibitem{Houdre2003}  Houdr\'e C, Villa J. An example of infinite dimensional quasi-helix[J].     Contemporary Mathematics,  2003,  336: 195-202.
\bibitem{Bardina2009} Bardina X, Bascompie D. A decomposition and weak approximation of the sub-fractional Brownian motion[J]. Preprint, Departament de Matematiques, UAB,  2009.
\bibitem{Jolis2007} Jolis M. On the Wiener integral with respect to the fractional Brownian motion on an interval[J]. J  Math  Anal  Appl, 2007, 330(2): 1115-1127.
\bibitem{Chending2021} Chen Y, Ding Z, Li Y. Berry-Esseen bounds and almost sure CLT for the quadratic variation of a general Gaussian noise[J]. arXiv: 2106.01851.
\bibitem{Cheridito2003}Cheridito P, Kawaguchi H, Maejima M. Fractional Ornstein-Uhlenbeck processes[J].  Electron. J. Probab, 2003,  8(3):205-211.
\bibitem{chenli2021} Chen Y, Li Y. Berry-esseen bound for the parameter estimation of fractional ornstein-uhlenbeck processes with $0 < H < \frac 12$[J].  Communications in Statistics-Theory and Methods, 2021,  50(13): 2996-3013.
\end{thebibliography}
\end{document}